\newtheorem{theorem}{Theorem}[section]
\newtheorem{definition}[theorem]{Definition}
\newtheorem{lemma}[theorem]{Lemma}
\newtheorem{corollary}[theorem]{Corollary}
\begin{document}
\textwidth 150mm \textheight 225mm

\title{ On the $A_\alpha$ spectral radius and $A_\alpha$ energy of digraphs
\thanks{This work is supported by the National Natural Science Foundation of China (No. 12001434).}}
\author{{Weige Xi}\\
{\small College of Science, Northwest A\&F University, Yangling, Shaanxi 712100, China.}\\
{\small E-mail:  xiyanxwg@163.com}\\}

\date{}
\maketitle
\begin{center}
\begin{minipage}{120mm}
\vskip 0.3cm
\begin{center}
{\small {\bf Abstract}}
\end{center}
{\small  Let $G$ be a digraph with adjacency matrix $A(G)$ and outdegrees diagonal matrix $D(G)$. For any real $\alpha\in[0,1]$, the $A_\alpha$ matrix $A_\alpha(G)$ of a digraph $G$ is defined as
$A_\alpha(G)=\alpha D(G)+(1-\alpha)A(G)$. The eigenvalue of $A_\alpha(G)$ with the largest modulus is called the $A_\alpha$ spectral radius of $G$. In this paper, we first give some upper bounds for the
$A_\alpha$ spectral radius of a digraph and we also characterize the extremal digraphs attaining these bounds. Moreover, we define the $A_\alpha$ energy of a digraph
$G$ as $E^{A_\alpha}(G)=\sum\limits_{i=1}^n(\lambda^\alpha_i(G))^2$, where $n$ is the number of vertices
and $\lambda^\alpha_i(G)$ $(i=1,2,\ldots,n)$ are the eigenvalues of $A_\alpha(G)$. We obtain a formula for $E^{A_\alpha}(G)$, and give a lower and upper bounds for $E^{A_\alpha}(G)$ and characterize the extremal digraphs that attain the lower and upper bounds. Finally, we characterize the extremal digraphs with maximum and minimum $A_\alpha$ energy among all directed trees
and unicyclic digraphs, respectively.

\vskip 0.1in \noindent {\bf Key Words}: $A_\alpha$ spectral radius, $A_\alpha$ energy, Bounds, Directed trees,  Unicyclic digraphs. \vskip
0.1in \noindent {\bf AMS Subject Classification (2000)}: \ 05C50,15A18}
\end{minipage}
\end{center}

\section{Introduction }
\label{sec:ch6-introduction}

Let $G=(V(G),E(G))$ be a digraph with vertex set $V(G)$ and arc set $E(G)$. If $(v_i,v_j)\in E(G)$ is an arc from vertices $v_i$ to $v_j$,
we call $v_j$ the head and $v_i$ the tail of $(v_i,v_j)$, respectively. For the digraph $G$, its underlying undirected graph is an undirected graph
obtained from $G$ by removing the direction of each arc and replacing the parallel edges with a single edge. An arc $(v_i,v_j)$ is symmetric if both
$(v_i,v_j)$ and $(v_j,v_i)$ are arcs in the digraph $G$. An arc with identical head and tail is called a loop. In this paper, we only consider
the digraphs with no loops and no two of its arcs have the same head and same tail. However, we allow the digraphs to have symmetric arcs.

For a digraph $G$, a directed path of length $k\geq1$ in $G$ is a finite non-null
sequence $v_1e_1v_2e_2\ldots e_kv_{k+1}$, whose terms are alternately vertices and arcs, such that, for $i=1,2,\ldots,k$,
the arc $e_i$ has head $v_{i+1}$ and tail $v_i$, and the vertices $v_1,v_2,\ldots,v_{k+1}$ are distinct. If $v_{k+1}=v_{1}$, then we call the sequence $v_1e_1v_2e_2\ldots e_kv_{k+1}$ a directed cycle of length $k\geq2$, it is often represented simply by its vertex sequence $v_1v_2\ldots v_{k}v_1$, and we often denoted it by $\overrightarrow{C_k}$. A digraph $G$ is connected if its underlying undirected graph is connected.  A digraph $G$ is strongly
connected if for any pair of vertices $v_i,v_j\in V(G)$, there exists a directed path from $v_i$ to $v_j$. A digraph is symmetric digraph if each arc is symmetric arc.
A digraph is called complete digraph if its underlying undirected graph is a complete undirected graph. A digraph $G$ is called symmetric complete digraph if for any two distinct vertices $v_i,v_j\in V(G)$, there are arcs $(v_i,v_j)\in E(G)$ and $(v_j,v_i)\in E(G)$. The complement $\bar{G}$ of $G$ is a digraph in which for any two distinct vertices $v_i, v_j$,
$(v_i,v_j)\in E(\bar{G})$ if and only if $(v_i,v_j)\notin E(G)$. For other terminology and notations of digraphs, we refer the readers to see \cite{BoMu,BG}.

For a digraph $G$, the outdegree $d_i^+$ of vertex $v_i$ in $G$ is the number of arcs with tail $v_i$,  and the indegree $d_i^-$ of $v_i$ in $G$ is the number of arcs with head $v_i$. We denote the minimum and maximum indegrees and outdegrees in $G$ by $\delta^-(G)$,  $\Delta^-(G)$, $\delta^+(G)$, $\Delta^+(G)$, respectively. Let $T_i^+=\sum\limits_{(v_i,v_j)\in E(G)}d_j^+$ denote the 2-outdegree of vertex $v_i$, $m_i^+=\frac{T_i^+}{d_i^+}$ denote the average 2-outdegree of $v_i$. A digraph is outdegree regular if each of its vertex has the same outdegree. A digraph is $k$-regular if each of its vertex has the outdegree $k$ and indegree $k$. A bipartite digraph is one whose vertex set can be partitioned into two subsets $V_1$ and $V_2$, such that each arc has one end in $V_1$ and one end in $V_2$, such a partition $(V_1,V_2)$ is called a bipartition of the digraph. A bipartite digraph with partition $(V_1,V_2)$ is outdegree semiregular if any vertex in $V_1$ has the same outdegree and any vertex in $V_2$ has the same outdegree.


For a digraph $G$ with vertex set $V(G)=\{v_1,v_2,\ldots,v_n\}$ and arc set $E(G)$,  the adjacency matrix $A(G)=(a_{ij})$ of $G$ is an $n\times n$ matrix, where $a_{ij}=1$ if $(v_i,v_j)\in E(G)$ and 0 otherwise. Let $D(G)=\textrm{diag}(d^+_1,d^+_2,\ldots,d^+_n)$ be the diagonal matrix with outdegrees of vertices of $G$. We call $Q(G)=D(G)+A(G)$, the signless Laplacian matrix of $G$.
In 2019, J.P. Liu et al. \cite{LWCL} defined the $A_\alpha$ matrix of $G$ as
$$A_\alpha(G)=\alpha D(G)+(1-\alpha) A(G), \ \ \ \alpha\in[0,1].$$
It is clear that $A_\alpha(G)=A(G)$ if $\alpha=0$, $2A_\alpha(G)=Q(G)$ if $\alpha=\frac{1}{2}$, and $A_\alpha(G)=D(G)$ if $\alpha=1$. From this it follows that
the matrix $A_\alpha(G)$ reduces to merging the adjacent spectral and signless Laplacian spectral theories. The eigenvalues of $A_\alpha(G)$ are called the $A_\alpha$ eigenvalues of the digraph $G$, and we denoted them by $\lambda^\alpha_1(G), \lambda^\alpha_2(G), \ldots,\lambda^\alpha_n(G)$,
or simply by $\lambda^\alpha_1, \lambda^\alpha_2, \ldots,\lambda^\alpha_n$ if it is clear from the context. The eigenvalue of $A_\alpha(G)$ with the largest modulus is called the $A_\alpha$ spectral radius of $G$, denoted by $\lambda_\alpha(G)=\lambda^\alpha_1(G)$. If $G$ is a strongly connected digraph, then it follows from the Perron Frobenius Theorem \cite{HJ} that $\lambda_\alpha(G)$ is an eigenvalue of $A_\alpha(G)$, and there is a unique positive unit eigenvector corresponding to $\lambda_\alpha(G)$. Since $\alpha=1$, $A_\alpha(G)=D(G)$, which makes no sense. Therefore, we assume that $0\leq \alpha<1$ in the rest of this paper.

The adjacent spectral radius and signless Laplacian spectral radius of digraphs had received a lot of attention from researchers. For some papers
and related results in these directions see \cite{BB,DL,HoYo,LD,LSWY,XiWa1,XSW} and the references therein. However, there is no much known about the
$A_\alpha$ spectral radius of digraphs. Recently, Xi et al. \cite{XiWa} determined the digraphs which attain the maximum (or minimum) $A_\alpha$ spectral radius among all strongly connected digraphs with given parameters such as girth, clique number, vertex connectivity and arc connectivity. Xi and Wang \cite{XiW} gave some lower bounds on
$\Delta^+(G)-\lambda_\alpha(G)$ for strongly connected irregular digraphs $G$ with given maximum outdegree and some
other parameters. Ganie and Baghipur \cite{GM} obtained some lower and upper bounds on the $A_\alpha$ spectral radius of digraphs
in terms of the number of arcs, the number of closed walks and the vertex outdegrees. They also characterized the extremal digraphs attaining these bounds.

In 2008, Pe\~{n}a and Rada \cite{PR} defined the energy of a digraph as
$$E(G)=\sum\limits_{i=1}^n|Re(\lambda_i)|,$$
where $\lambda_1,\lambda_2,\ldots,\lambda_n$ are the eigenvalues of $A(G)$ and $Re(\lambda_i)$
denotes the real part of $\lambda_i$, $1 \leq i \leq n$.
For some results on the energy of digraphs, see \cite{AyBa,MJ,Ra,YW}. Recently, the Laplacian energy and signless Laplacian energy
of a digraph has attached attention. In 2010, Perera and Mizoguchi \cite{PM} defined
the Laplacian energy of a digraph $G$ as
$$LE(G)=\sum\limits_{i=1}^n \mu_i^2,$$
where $\mu_i$ are all the eigenvalues of the Laplacian matrix $L(G)=D(G)- A(G)$. They also obtained a formula for $LE(G)$
and presented lower and upper bounds for $LE(G)$ in term of the number of vertices. In 2016, Qi et al. \cite{QFL} presented a unified formula for
$LE(G)$ for both simple and symmetric digraphs. They also obtained lower and upper bounds for the Laplacian energy of digraphs and also
characterized the extremal digraphs which attain the lower and upper bounds. In 2020, Yang and Wang \cite{YaW}
determined the digraphs which have the maximal and minimal Laplacian energy among all
the directed trees, unicyclic digraphs and bicyclic digraphs with $n$ vertices, respectively. In this paper, we will study the $A_\alpha$ energy of digraphs.

In this paper, we first give some upper bounds on $A_\alpha$ spectral radius of digraphs,
and we also characterize the extremal digraphs attaining these bounds. Moreover, we define the $A_\alpha$ energy of a digraph
$G$ as $E^{A_\alpha}(G)=\sum\limits_{i=1}^n(\lambda^\alpha_i(G))^2$, where $n$ is the number of vertices
and $\lambda^\alpha_i(G)$ $(i=1,2,\ldots,n)$ are the eigenvalues of $A_\alpha(G)$.
We obtain a formula for $E^{A_\alpha}(G)$, and give a lower and upper bounds for $E^{A_\alpha}(G)$ and characterize the extremal digraphs that attain the
lower and upper bounds. Finally, we characterize the extremal digraph with maximum and minimum $A_\alpha$ energy among all directed trees
and unicyclic digraphs with $n$ vertices, respectively.

\section{Bounds for the $A_\alpha$ spectral radius of strongly connected digraphs}

In this section, we will give some upper bounds
on the $A_\alpha$ spectral radius of digraphs. The following observation can be found in \cite{XiWa}.

\noindent\begin{lemma}\label{le:1}(\cite{XiWa})
Let $G$ be a strongly connected digraph with the maximum outdegree $\Delta^+$. Then
$\lambda_\alpha(G)>\alpha\Delta^+$.
\end{lemma}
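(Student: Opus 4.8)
The plan is to work directly with the Perron eigenvector of $A_\alpha(G)$. Since $G$ is strongly connected, the matrix $A_\alpha(G)=\alpha D(G)+(1-\alpha)A(G)$ is nonnegative and irreducible: adding the nonnegative diagonal $\alpha D(G)$ does not change the off-diagonal zero pattern (it is $(1-\alpha)A(G)$ there), so the digraph associated to $A_\alpha(G)$ is again $G$ and hence strongly connected. Thus, as recalled just before the lemma, the Perron--Frobenius theorem guarantees that $\lambda_\alpha(G)$ is an eigenvalue of $A_\alpha(G)$ admitting a strictly positive eigenvector $x=(x_1,\dots,x_n)^{T}$, i.e. $x_i>0$ for every $i$.

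First I would fix a vertex $v_k$ realizing the maximum outdegree, so that $d_k^{+}=\Delta^{+}$. Reading off the $k$-th coordinate of the identity $A_\alpha(G)x=\lambda_\alpha(G)x$ gives
\[
\lambda_\alpha(G)\,x_k=\alpha d_k^{+}x_k+(1-\alpha)\!\!\sum_{(v_k,v_j)\in E(G)}\!\!x_j=\alpha\Delta^{+}x_k+(1-\alpha)\!\!\sum_{(v_k,v_j)\in E(G)}\!\!x_j,
\]
and rearranging yields
\[
\bigl(\lambda_\alpha(G)-\alpha\Delta^{+}\bigr)x_k=(1-\alpha)\!\!\sum_{(v_k,v_j)\in E(G)}\!\!x_j.
\]

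The final step is to verify that the right-hand side is strictly positive and then divide by $x_k>0$. Because $G$ is strongly connected (on at least two vertices) every vertex has outdegree at least one, so the out-neighbourhood of $v_k$ is nonempty and the sum ranges over at least one term; since each $x_j>0$ and $1-\alpha>0$ (recall $0\le\alpha<1$), the right-hand side is positive. Hence $\lambda_\alpha(G)-\alpha\Delta^{+}>0$, which is exactly the claimed inequality. The only delicate point—the closest thing to an obstacle—is securing strict positivity of that sum, and this is precisely where both strong connectedness (forcing a nonempty out-neighbourhood of $v_k$) and the positivity of the Perron eigenvector enter. Note that a crude test such as the all-ones vector only returns $\lambda_\alpha(G)\ge\delta^{+}$, which need not exceed $\alpha\Delta^{+}$; anchoring the computation at a maximum-outdegree vertex is the decisive choice that produces the strict bound.
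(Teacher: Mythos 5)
The paper does not prove this lemma---it is quoted from \cite{XiWa} without proof---so there is no in-paper argument to compare against. Your proof is correct and is the standard Perron--Frobenius argument one would expect the cited reference to use: evaluate the eigenvalue equation at a maximum-outdegree vertex and use positivity of the Perron vector together with the fact that strong connectedness (on at least two vertices) forces a nonempty out-neighbourhood. No gaps.
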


\begin{theorem}\label{thm:2} \ Let $G=(V(G), E(G))$ be a strongly
connected digraph with $n$ vertices. Let $\Delta^+$ be the maximum outdegree and  $\Delta^-$ be the maximum indegree of $G$. Then
$$ \lambda_\alpha(G)\leq \alpha \Delta^{+} +(1-\alpha)\sqrt{\Delta^{+}\Delta^{-}},$$
 and the equality holds if and only if $G$ is a regular digraph.
\end{theorem}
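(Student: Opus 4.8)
The plan is to work with the Perron eigenvector of $A_\alpha(G)$ and to combine a Cauchy--Schwarz estimate over out-neighbourhoods with a double-counting step that converts outdegrees into indegrees. Since $G$ is strongly connected, the Perron--Frobenius theorem supplies a positive unit eigenvector $x=(x_1,\dots,x_n)^T$ with $A_\alpha(G)x=\lambda_\alpha(G)x$. Writing $\lambda=\lambda_\alpha(G)$ and reading off the $i$-th coordinate gives
$$(\lambda-\alpha d_i^+)x_i=(1-\alpha)\sum_{(v_i,v_j)\in E(G)}x_j.$$
The first fact I would record is that $\lambda-\alpha d_i^+>0$ for every $i$: by Lemma~\ref{le:1} we have $\lambda>\alpha\Delta^+\ge\alpha d_i^+$. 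This positivity is the hinge of the argument, because it is exactly what will let me square the coordinate equation later without reversing the inequality.

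Next I would apply Cauchy--Schwarz to the right-hand sum, $\sum_{(v_i,v_j)\in E(G)}x_j\le\sqrt{d_i^+}\,\bigl(\sum_{(v_i,v_j)\in E(G)}x_j^2\bigr)^{1/2}$, then square the coordinate equation and use $\lambda-\alpha d_i^+\ge\lambda-\alpha\Delta^+>0$ together with $d_i^+\le\Delta^+$ to obtain
$$(\lambda-\alpha\Delta^+)^2x_i^2\le(1-\alpha)^2\Delta^+\sum_{(v_i,v_j)\in E(G)}x_j^2.$$
Summing over $i$ and swapping the order of summation turns $\sum_i\sum_{(v_i,v_j)\in E(G)}x_j^2$ into $\sum_j d_j^- x_j^2$, which I bound by $\Delta^-\sum_j x_j^2$. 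After cancelling $\sum_j x_j^2>0$ this yields $(\lambda-\alpha\Delta^+)^2\le(1-\alpha)^2\Delta^+\Delta^-$, and taking square roots (legitimate since $\lambda-\alpha\Delta^+>0$) gives the claimed bound.

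For the equality analysis I would trace back each inequality. Equality in $(\lambda-\alpha\Delta^+)^2\le(\lambda-\alpha d_i^+)^2$ forces $d_i^+=\Delta^+$ for all $i$, and equality in $\sum_j d_j^- x_j^2\le\Delta^-\sum_j x_j^2$ forces $d_j^-=\Delta^-$ for all $j$ (both using $x>0$, and strong connectivity guaranteeing the relevant coefficients are nonzero). Thus $G$ is simultaneously outdegree-regular and indegree-regular; comparing $\sum_i d_i^+=|E(G)|=\sum_j d_j^-$ gives $n\Delta^+=n\Delta^-$, so $\Delta^+=\Delta^-$ and $G$ is regular. Conversely, if $G$ is $k$-regular then $\mathbf{1}$ is the Perron eigenvector with $A_\alpha(G)\mathbf{1}=k\mathbf{1}$, so $\lambda=k$ and both sides of the bound equal $k$, confirming equality. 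The one place demanding care is precisely this equality step: I must check that the degree conditions harvested from two \emph{separate} inequalities genuinely combine to ``regular'' in the sense defined here (equal in- and out-degree) rather than merely out-regular, and it is the counting identity $\sum_i d_i^+=\sum_j d_j^-$ that closes this gap.
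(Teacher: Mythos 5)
Your proposal is correct and follows essentially the same route as the paper: the Perron eigenvector, a Cauchy--Schwarz bound over out-neighbourhoods, the summation swap converting $\sum_i d_i^+\sum_{(v_i,v_j)\in E(G)}x_j^2$ into $\sum_j d_j^- x_j^2$, the positivity $\lambda_\alpha(G)-\alpha\Delta^+>0$ from Lemma~\ref{le:1}, and the counting identity $\sum_i d_i^+=\sum_j d_j^-$ to upgrade out- and in-regularity to regularity. The only cosmetic difference is that you replace $(\lambda-\alpha d_i^+)^2$ by $(\lambda-\alpha\Delta^+)^2$ coordinate-wise before summing, whereas the paper sums first and bounds afterwards; the two are equivalent.
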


\begin{proof} Let
${\bf X}=(x_1,x_2,\ldots,x_n)^{T}$ be the positive unit eigenvector of $A_\alpha(G)$ corresponding to the eigenvalue $\lambda_\alpha(G)$.
Then we have
$$ \lambda_\alpha(G)x_i=\alpha d_i^+x_i+(1-\alpha)\sum\limits_{(v_i,v_j)\in E(G)}x_j.$$
Furthermore, we obtain
\begin{align*}
(\lambda_\alpha(G)-\alpha d_i^+)^2x_i^2&=(1-\alpha)^2(\sum\limits_{(v_i,v_j)\in E(G)}x_j)^2\\
&\leq(1-\alpha)^2\sum\limits_{(v_i,v_j)\in E(G)}1^2\sum\limits_{(v_i,v_j)\in E(G)}x_j^2\\
&=(1-\alpha)^2d_i^+\sum\limits_{(v_i,v_j)\in E(G)}x_j^2.
\end{align*}
Thus
\begin{align*}
 \sum\limits_{i=1}^n\left[(\lambda_\alpha(G)-\alpha d_i^+)^2x_i^2\right]&=(1-\alpha)^2 \sum\limits_{i=1}^n\left(d_i^+\sum\limits_{(v_i,v_j)\in E(G)}x_j^2\right)\\
&\leq(1-\alpha)^2 \Delta^+ \sum\limits_{i=1}^n\sum\limits_{(v_i,v_j)\in E(G)}x_j^2\\
&=(1-\alpha)^2 \Delta^+\sum\limits_{i=1}^nd_i^-x_i^2\\
&\leq(1-\alpha)^2 \Delta^+\Delta^-.
\end{align*}
On the other hand, by Lemma \ref{le:1}, $\lambda_\alpha(G)-\alpha\Delta^+>0$, we have
$$ \sum\limits_{i=1}^n\left[(\lambda_\alpha(G)-\alpha d_i^+)^2x_i^2\right]\geq(\lambda_\alpha(G)-\alpha \Delta^+)^2\sum\limits_{i=1}^nx_i^2=(\lambda_\alpha(G)-\alpha \Delta^+)^2.$$
Therefore, we get
$$(\lambda_\alpha(G)-\alpha \Delta^+)^2 \leq\sum\limits_{i=1}^n\left[(\lambda_\alpha(G)-\alpha d_i^+)^2x_i^2\right]\leq(1-\alpha)^2 \Delta^+\Delta^-,$$
which means
$$\lambda_\alpha(G)-\alpha \Delta^+\leq(1-\alpha)\sqrt{\Delta^+\Delta^-},$$
that is
$$\lambda_\alpha(G)\leq\alpha \Delta^++(1-\alpha)\sqrt{\Delta^+\Delta^-}.$$
If the equality holds, then all inequalities in the above argument must be equalities. Hence, we get
$x_1=x_2=\cdots=x_n$, $d_i^+=\Delta^+$ for all $1\leq i\leq n$, and $d_i^-=\Delta^-$ for all $1\leq i\leq n$. Furthermore,
$n\Delta^+=\sum\limits_{i=1}^n d_i^+=m=\sum\limits_{i=1}^n d_i^-=n\Delta^-$, which means $G$ is a regular digraph.

For converse, if $G$ is a $k$-regular digraph, then $d_i^+=d_i^-=k$ for all $1\leq i\leq n$ and $\lambda_\alpha(G)=k$.
Furthermore, $\alpha \Delta^++(1-\alpha)\sqrt{\Delta^+\Delta^-}=k$. Thus $\lambda_\alpha(G)=\alpha \Delta^++(1-\alpha)\sqrt{\Delta^+\Delta^-}.$
\end{proof}

\begin{theorem}\label{thm:3} \ Let $G=(V(G), E(G))$ be a strongly
connected digraph with $n$ vertices and $m$ arcs. Let $\Delta^+$ be the maximum outdegree,  $\Delta^-$ be the maximum indegree, $\delta^+$ be the minimum outdegree of $G$. Then
$$ \lambda_\alpha(G)\leq \alpha \Delta^{+} +(1-\alpha)\sqrt{m-\delta^+(n-\Delta^-)},$$
 and the equality holds if and only if $G$ is a regular digraph.
\end{theorem}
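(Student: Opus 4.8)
The plan is to reuse the opening of the proof of Theorem~\ref{thm:2}, and then replace the crude estimate of the resulting double sum by a complementary counting argument. Let $\mathbf{X}=(x_1,\ldots,x_n)^T$ be the positive unit Perron eigenvector of $A_\alpha(G)$ for $\lambda_\alpha(G)$, so that $(\lambda_\alpha(G)-\alpha d_i^+)x_i=(1-\alpha)\sum_{(v_i,v_j)\in E(G)}x_j$. Applying Cauchy--Schwarz exactly as before gives $(\lambda_\alpha(G)-\alpha d_i^+)^2x_i^2\le(1-\alpha)^2 d_i^+\sum_{(v_i,v_j)\in E(G)}x_j^2$; summing over $i$ and invoking Lemma~\ref{le:1} to bound the left side from below by $(\lambda_\alpha(G)-\alpha\Delta^+)^2$ reduces everything to proving
$$S:=\sum_{i=1}^n d_i^+\sum_{(v_i,v_j)\in E(G)}x_j^2\le m-\delta^+(n-\Delta^-).$$
The whole difficulty is isolated in this inequality; note that, unlike in Theorem~\ref{thm:2}, I must \emph{not} factor out $\Delta^+$, since the target bound carries no $\Delta^+$.

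To bound $S$, I would rewrite it as a sum over arcs, $S=\sum_{(v_i,v_j)\in E(G)}d_i^+x_j^2$, and complete it to a sum over all ordered pairs of distinct vertices. Since $\sum_{i\ne j}d_i^+x_j^2=\sum_i d_i^+(1-x_i^2)=m-\sum_i d_i^+x_i^2$, subtracting the non-arc terms yields
$$S=m-\sum_{i=1}^n d_i^+x_i^2-\sum_{\substack{(v_i,v_j)\notin E(G)\\ i\ne j}}d_i^+x_j^2.$$
Now I would lower-bound each subtracted piece using $d_i^+\ge\delta^+$: the diagonal part gives $\sum_i d_i^+x_i^2\ge\delta^+$, while the non-arc part gives $\sum_{(v_i,v_j)\notin E(G),\,i\ne j}d_i^+x_j^2\ge\delta^+\sum_j(n-1-d_j^-)x_j^2\ge\delta^+(n-1-\Delta^-)$, where the last step uses $d_j^-\le\Delta^-$ and $\sum_j x_j^2=1$. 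Adding these gives $S\le m-\delta^+-\delta^+(n-1-\Delta^-)=m-\delta^+(n-\Delta^-)$, which, combined with the chain from the first paragraph, completes the inequality.

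For the equality discussion I would trace the conditions backwards. Equality in Cauchy--Schwarz forces the $x_j$ over the out-neighbours of each vertex to be equal, which together with strong connectivity makes $\mathbf{X}$ constant; equality in the Lemma-based lower bound forces $d_i^+=\Delta^+$ for every $i$; and equality in the two new lower bounds forces $d_i^+=\delta^+$ and $d_j^-=\Delta^-$ for every $i,j$ (using $x_i>0$). Hence $G$ is both outdegree-regular and indegree-regular, and $\sum_i d_i^+=\sum_i d_i^-$ forces the common out- and indegree to coincide, so $G$ is regular; the converse is identical to that in Theorem~\ref{thm:2}. The main obstacle I anticipate is precisely the estimate of $S$: choosing the right completion to all ordered pairs and recognizing that one must \emph{lower}-bound the subtracted diagonal and non-arc contributions, so that the outdegree lower bound $\delta^+$ and the indegree upper bound $\Delta^-$ enter with the correct signs.
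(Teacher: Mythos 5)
Your proposal is correct and follows essentially the same route as the paper: after the Cauchy--Schwarz step and the Lemma~\ref{le:1} lower bound, the paper also writes $\sum_{(v_i,v_j)\in E(G)}x_j^2=1-\sum_{(v_i,v_j)\notin E(G)}x_j^2$, splits off the diagonal term, and lower-bounds the two subtracted pieces by $\delta^+$ and $\delta^+\sum_i(n-1-d_i^-)x_i^2\ge\delta^+(n-1-\Delta^-)$, exactly as you do. Your completion of the arc-sum to all ordered pairs is just a rephrasing of that identity, and your equality analysis matches the paper's.
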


\begin{proof}  Let
${\bf X}=(x_1,x_2,\ldots,x_n)^{T}$ be the positive unit eigenvector of $A_\alpha(G)$ corresponding to the eigenvalue $\lambda_\alpha(G)$.
Similar to the proof of Theorem \ref{thm:2}, we have
\begin{align*}
(\lambda_\alpha(G)-\alpha d_i^+)^2x_i^2&\leq(1-\alpha)^2d_i^+\sum\limits_{(v_i,v_j)\in E(G)}x_j^2 \\
&=(1-\alpha)^2d_i^+\left(1-\sum\limits_{(v_i,v_j)\notin E(G)}x_j^2\right).
\end{align*}
Thus
\begin{align*}
 \sum\limits_{i=1}^n\left[(\lambda_\alpha(G)-\alpha d_i^+)^2x_i^2\right]&\leq(1-\alpha)^2 \sum\limits_{i=1}^nd_i^+- (1-\alpha)^2 \sum\limits_{i=1}^n\left(d_i^+\sum\limits_{(v_i,v_j)\notin E(G)}x_j^2\right)\\
&=(1-\alpha)^2 \left(m- \sum\limits_{i=1}^n\left(d_i^+\sum\limits_{(v_i,v_j)\notin E(G)}x_j^2\right)\right).
\end{align*}
On the other hand,
\begin{align*}
\sum\limits_{i=1}^n\left(d_i^+\sum\limits_{(v_i,v_j)\notin E(G)}x_j^2\right)&=\sum\limits_{i=1}^nd_i^+x_i^2+\sum\limits_{i=1}^n \left(d_i^+\sum\limits_{(v_i,v_j)\notin E(G), v_j\neq v_i}x_j^2\right)\\
&\geq \delta^+\sum\limits_{i=1}^nx_i^2+\delta^+\sum\limits_{i=1}^n\sum\limits_{(v_i,v_j)\notin E(G), v_j\neq v_i}x_j^2\\
&= \delta^++\delta^+\sum\limits_{i=1}^n(n-1-d_i^-)x_i^2\\
&=\delta^+\sum\limits_{i=1}^n(n-d_i^-)x_i^2\\
&\geq\delta^+\sum\limits_{i=1}^n(n-\Delta^-)x_i^2\\
&=\delta^+(n-\Delta^-).
\end{align*}
Therefore
$$(\lambda_\alpha(G)-\alpha \Delta^+)^2\leq \sum\limits_{i=1}^n\left[(\lambda_\alpha(G)-\alpha d_i^+)^2x_i^2\right] \leq(1-\alpha)^2\left (m-\delta^+(n-\Delta^-)\right),$$
which means
$$\lambda_\alpha(G)-\alpha \Delta^+ \leq(1-\alpha)\sqrt{m-\delta^+(n-\Delta^-)},$$
that is
$$\lambda_\alpha(G) \leq\alpha \Delta^++(1-\alpha)\sqrt{m-\delta^+(n-\Delta^-)}.$$
If the equality holds, then all inequalities in the above argument must be equalities. Hence, we get
$x_1=x_2=\cdots=x_n$, $d_i^+=\Delta^+=\delta^+$ for all $1\leq i\leq n$, and $d_i^-=\Delta^-$ for all $1\leq i\leq n$,
which means $G$ is a regular digraph.

For converse, if $G$ is a $k$-regular digraph, then $d_i^+=d_i^-=k$ for all $1\leq i\leq n$ and $\lambda_\alpha(G)=k$.
Furthermore, $\alpha \Delta^++(1-\alpha)\sqrt{m-\delta^+(n-\Delta^-)}=k$. Thus $\lambda_\alpha(G)=\alpha \Delta^++(1-\alpha)\sqrt{m-\delta^+(n-\Delta^-)}.$
\end{proof}

\begin{theorem}\label{thm:4} \ Let $G=(V(G), E(G))$ be a strongly
connected digraph with $n$ vertices. Then
$$\lambda_\alpha(G)\leq\max\left\{\frac{\alpha(d_i^++d_j^+)+\sqrt{\alpha^2(d_i^+-d_j^+)^2+4(1-\alpha)^2m_i^+m_j^+}}{2} : (v_i, v_j)\in E(G)\right\},$$
and if $G$ is outdegree regular or outdegree semiregular digraph, then the equality holds.
\end{theorem}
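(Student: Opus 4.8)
The plan is to reduce the asserted bound to a single-arc inequality and then read off the larger root of a quadratic. Writing $\lambda$ for $\lambda_\alpha(G)$, I would aim to produce one arc $(v_p,v_q)\in E(G)$ for which
$$(\lambda-\alpha d_p^+)(\lambda-\alpha d_q^+)\le(1-\alpha)^2 m_p^+ m_q^+.$$
Indeed, the expression on the right of the theorem is exactly the larger root $\theta_{pq}$ of the upward parabola $f(t)=(t-\alpha d_p^+)(t-\alpha d_q^+)-(1-\alpha)^2 m_p^+ m_q^+$, so once $f(\lambda)\le0$ is known, $\lambda$ is trapped below $\theta_{pq}$, and hence below the maximum of $\theta_{ij}$ over all arcs.

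As in the proofs of Theorems \ref{thm:2} and \ref{thm:3}, I would start from the positive unit eigenvector ${\bf X}=(x_1,\ldots,x_n)^T$ and the coordinate identity $(\lambda-\alpha d_i^+)x_i=(1-\alpha)\sum_{(v_i,v_j)\in E(G)}x_j$, where Lemma \ref{le:1} guarantees $\lambda-\alpha d_i^+>0$ for every $i$. The delicate point, and the step at which a crude argument fails, is how the \emph{average} $2$-outdegree $m_i^+$ is made to appear: bounding each $x_j$ on the right by the global maximum of the coordinates only yields the weaker estimate with $\sqrt{d_i^+ d_j^+}$ in place of $\sqrt{m_i^+ m_j^+}$. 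To recover $m_i^+$ I would weight the coordinates by outdegree, setting $y_i=x_i/d_i^+$ (positive and well defined, since strong connectivity forces $d_i^+\ge1$), and invoke the mediant inequality
$$\sum_{(v_i,v_j)\in E(G)}x_j=\sum_{(v_i,v_j)\in E(G)}d_j^+\,\frac{x_j}{d_j^+}\le\Big(\sum_{(v_i,v_j)\in E(G)}d_j^+\Big)\max_{(v_i,v_j)\in E(G)}\frac{x_j}{d_j^+}=d_i^+ m_i^+\max_{(v_i,v_j)\in E(G)}y_j.$$
Dividing the coordinate identity by $d_i^+$ then gives, for every vertex $v_i$, the relation $(\lambda-\alpha d_i^+)y_i\le(1-\alpha)m_i^+\max_{(v_i,v_j)\in E(G)}y_j$.

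To close the argument I would let $v_p$ attain $\max_i y_i$ and let $v_q$ be an out-neighbour of $v_p$ attaining $\max_{(v_p,v_j)\in E(G)}y_j$, so that $(v_p,v_q)\in E(G)$. The relation above at $v_p$ reads $(\lambda-\alpha d_p^+)y_p\le(1-\alpha)m_p^+ y_q$, while at $v_q$, using that $y_p$ is the global maximum, it reads $(\lambda-\alpha d_q^+)y_q\le(1-\alpha)m_q^+ y_p$. Both sides of both inequalities are positive, so I may multiply them and cancel the positive factor $y_p y_q$ to obtain precisely the single-arc inequality sought. For the equality statement I would exhibit the eigenvector directly: when $G$ is outdegree regular with outdegree $r$ one has $A_\alpha(G){\bf 1}=r{\bf 1}$, so $\lambda=r=\theta_{ij}$ for every arc; when $G$ is outdegree semiregular bipartite with part-outdegrees $r$ and $s$, the vector that is constant on each part is an eigenvector for the larger root of $(t-\alpha r)(t-\alpha s)=(1-\alpha)^2 rs$, and since $m^+$ equals $s$ on one side and $r$ on the other, this root is exactly $\theta_{ij}$ for every arc.

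The principal obstacle is producing $m_i^+ m_j^+$ rather than $d_i^+ d_j^+$ in the single-arc inequality; this is what forces the outdegree weighting $y_i=x_i/d_i^+$ and the use of the mediant inequality. It is also the reason Lemma \ref{le:1} and strong connectivity are needed, namely to guarantee that every factor $\lambda-\alpha d_i^+$, every $y_i$, and every $m_i^+$ is positive, so that multiplying the two one-sided inequalities is legitimate. The remaining passage from $f(\lambda)\le0$ to $\lambda\le\theta_{pq}$ is routine.
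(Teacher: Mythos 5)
Your proof is correct and follows essentially the same route as the paper's: the substitution $y_i=x_i/d_i^+$ is precisely the diagonal similarity $D^{-1}A_\alpha(G)D$ used there, and your choice of $v_p$ (global maximizer of $y$) and $v_q$ (maximizer of $y$ over out-neighbours of $v_p$) matches the paper's normalization $x_1=1$, $x_2=\max\{x_k:(v_1,v_k)\in E(G)\}$, after which both arguments multiply the same two one-sided inequalities and solve the resulting quadratic. Your treatment of the equality cases is in fact more explicit than the paper's, which only remarks that equality "can easily" be checked.
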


\begin{proof} Let ${\bf X}=(x_1,x_2,\ldots,x_n)^{T}$ be an positive eigenvector
corresponding to the eigenvalue $\lambda_\alpha(G)$ of $D^{-1}A_\alpha(G)D$, where
$D$ is the diagonal matrix with outdegrees of vertices of $G$. We assume that one eigencomponent $x_1$ is equal to 1 and
the other eigencomponents are less than or equal to 1, that is $x_1 = 1$ and $0 < x_k \leq 1$, for all $1 \leq k \leq n$.
Let $x_2=\max\{x_k: (v_1, v_k\in E(G)\}$. From
$(D^{-1}A_\alpha(G)D){\bf X} = \lambda_\alpha(G){\bf X}$, we have
$$\lambda_\alpha(G)x_1 = \alpha d_1^{+}x_1 +(1-\alpha) \sum\limits_{(v_1, v_k)\in E(G)}\frac{d_k^+}{d_1^+}x_k.$$
That is
$$\lambda_\alpha(G)-\alpha d_1^{+}\leq(1-\alpha)m_1^+x_2.$$
And
$$\lambda_\alpha(G)x_2 = \alpha d_2^{+}x_2 +(1-\alpha) \sum\limits_{(v_2, v_k)\in E(G)}\frac{d_k^+}{d_2^+}x_k.$$
That is
$$(\lambda_\alpha(G)-\alpha d_2^{+})x_2\leq(1-\alpha)m_2^+.$$
By Lemma \ref{le:1}, we have $\lambda_\alpha(G)>\alpha \Delta^{+}$, where  $\Delta^+$ is the maximum outdegree of $G$. Then
$$(\lambda_\alpha(G)-\alpha d_2^{+})(\lambda_\alpha(G)-\alpha d_1^{+})\leq(1-\alpha)^2m_1m_2^+,$$
which means
$$\lambda_\alpha(G)\leq\frac{\alpha(d_1^++d_2^+)+\sqrt{\alpha^2(d_1^+-d_2^+)^2+4(1-\alpha)^2m_1^+m_2^+}}{2} .$$
Hence,
$$\lambda_\alpha(G)\leq\max\left\{\frac{\alpha(d_i^++d_j^+)+\sqrt{\alpha^2(d_i^+-d_j^+)^2+4(1-\alpha)^2m_i^+m_j^+}}{2} : (v_i, v_j)\in E(G)\right\}.$$
Furthermore, if $G$ is outdegree regular or outdegree semiregular digraph, we can easily get the equality holds.


\end{proof}

\begin{theorem}\label{thm:1} \ Let $G=(V(G), E(G))$ be a strongly
connected digraph with $n$ vertices and $b_i\in \mathbb{R}^{+}$ ($1\leq i\leq n$).
Then
\begin{equation}\label{eq:ca} \lambda_\alpha(G)\leq \max\left\{\alpha d_i^{+} +(1-\alpha)\sqrt{\sum\limits_{(v_j,v_i) \in E(G)}s_j^+} : v_i \in
V(G)\right\},
\end{equation}
 where $s_i^+= \frac{\sum\limits_{(v_i,v_j) \in E(G)}b_j^2 }{b_i^2}$, and if the equality holds then
$\alpha d_i^{+} +(1-\alpha)\sqrt{\sum\limits_{(v_j,v_i) \in E(G)}s_j^+}$  ($1\leq i\leq n$) is a constant.
\end{theorem}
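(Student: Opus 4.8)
The plan is to run the same weighted Rayleigh-quotient machinery used in Theorems \ref{thm:2}--\ref{thm:4}, but with the free weights $b_i$ inserted at the Cauchy--Schwarz step so that the quantities $s_i^+$ appear. Let ${\bf X}=(x_1,x_2,\ldots,x_n)^T$ be the positive unit Perron eigenvector of $A_\alpha(G)$, so that $(\lambda_\alpha(G)-\alpha d_i^+)x_i=(1-\alpha)\sum_{(v_i,v_j)\in E(G)}x_j$ for every $i$. Writing $x_j=b_j\cdot(x_j/b_j)$ inside the sum and applying Cauchy--Schwarz gives $\bigl(\sum_{(v_i,v_j)\in E(G)}x_j\bigr)^2\le\bigl(\sum_{(v_i,v_j)\in E(G)}b_j^2\bigr)\bigl(\sum_{(v_i,v_j)\in E(G)}x_j^2/b_j^2\bigr)$. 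Since $\sum_{(v_i,v_j)\in E(G)}b_j^2=b_i^2 s_i^+$, squaring the eigen-equation and dividing through by $b_i^2$ yields, with the shorthand $w_i:=x_i^2/b_i^2$, the pointwise bound $(\lambda_\alpha(G)-\alpha d_i^+)^2\,w_i\le(1-\alpha)^2 s_i^+\sum_{(v_i,v_j)\in E(G)}w_j$ for each vertex $v_i$.

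The decisive step is to sum this inequality over all $i$ and interchange the order of summation on the right: $\sum_i s_i^+\sum_{(v_i,v_j)\in E(G)}w_j=\sum_j w_j\sum_{(v_i,v_j)\in E(G)}s_i^+$, where the inner sum now runs over the \emph{in}-neighbours of $v_j$ and is exactly the quantity $\sum_{(v_i,v_j)\in E(G)}s_i^+$ attached to $v_j$. Relabelling the left-hand index, this gives $\sum_j\bigl[(\lambda_\alpha(G)-\alpha d_j^+)^2-(1-\alpha)^2\sum_{(v_i,v_j)\in E(G)}s_i^+\bigr]w_j\le 0$. Because the Perron eigenvector is strictly positive and $b_j>0$, every $w_j>0$, so at least one bracketed term is $\le 0$; for that index $j$ we have $(\lambda_\alpha(G)-\alpha d_j^+)^2\le(1-\alpha)^2\sum_{(v_i,v_j)\in E(G)}s_i^+$. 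Invoking Lemma \ref{le:1} to ensure $\lambda_\alpha(G)-\alpha d_j^+>0$, taking square roots yields $\lambda_\alpha(G)\le\alpha d_j^+ +(1-\alpha)\sqrt{\sum_{(v_i,v_j)\in E(G)}s_i^+}$, which is at most the maximum over all vertices, establishing \eqref{eq:ca}.

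For the equality clause, set $h_j:=(\lambda_\alpha(G)-\alpha d_j^+)^2-(1-\alpha)^2\sum_{(v_i,v_j)\in E(G)}s_i^+$. If $\lambda_\alpha(G)$ attains the maximum in \eqref{eq:ca}, then $\lambda_\alpha(G)\ge\alpha d_j^+ +(1-\alpha)\sqrt{\sum_{(v_i,v_j)\in E(G)}s_i^+}$ for every $j$, which, using $\lambda_\alpha(G)-\alpha d_j^+>0$ from Lemma \ref{le:1}, is precisely $h_j\ge 0$ for all $j$. Together with $\sum_j h_j w_j\le 0$ and $w_j>0$, this forces $h_j=0$ for every $j$, i.e. $\alpha d_j^+ +(1-\alpha)\sqrt{\sum_{(v_i,v_j)\in E(G)}s_i^+}=\lambda_\alpha(G)$ is the same constant across all vertices, as claimed.

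The main obstacle is the summation-interchange step: the Cauchy--Schwarz estimate naturally produces an \emph{out}-neighbour sum $\sum_{(v_i,v_j)\in E(G)}w_j$, whereas the target bound is phrased through the \emph{in}-neighbour quantity $\sum_{(v_j,v_i)\in E(G)}s_j^+$. Recognising that a double-counting (Fubini) rearrangement matches these two, and that dividing by $b_i^2$ beforehand is exactly what lets the weights $s_i^+$ reassemble correctly after the swap, is the crux; everything else is routine once the auxiliary variable $w_i=x_i^2/b_i^2$ is chosen.
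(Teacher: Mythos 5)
Your proof is correct and follows essentially the same route as the paper's: the same Cauchy--Schwarz estimate followed by summing over vertices and interchanging the order of summation, with your substitution $w_i=x_i^2/b_i^2$ applied to the Perron vector of $A_\alpha(G)$ being just the change of variables that the paper effects by working instead with the similar matrix $B^{-1}A_\alpha(G)B$. Your treatment of the equality case (showing $h_j\geq 0$ for all $j$ and combining with $\sum_j h_jw_j\leq 0$) is in fact a slightly cleaner rendering of the paper's iterative argument.
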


\begin{proof} Let ${\bf X}=(x_1,x_2,\ldots,x_n)^{T}$ be an positive eigenvector
corresponding to the eigenvalue $\lambda_\alpha(G)$ of $B^{-1}A_\alpha(G)B$, where
$B=\textrm{diag}(b_1,b_2,\ldots,b_n)$.
From
$(B^{-1}A_\alpha(G)B){\bf X} = \lambda_\alpha(G){\bf X}$, we have
$$\lambda_\alpha(G)x_k = \alpha d_k^{+}x_k +(1-\alpha) \sum\limits_{(v_k, v_h)\in E(G)}\frac{b_h}{b_k}x_h.$$
That is
$$(\lambda_\alpha(G)-\alpha d_k^{+}) x_k=(1-\alpha) \sum\limits_{(v_k, v_h)\in E(G)}\frac{b_h}{b_k}x_h.$$
By Cauchy-Schwartz inequality, we get
\begin{align*}
[(\lambda_\alpha(G)-\alpha d_k^{+}) x_k]^2&\leq(1-\alpha)^2 \sum\limits_{(v_k, v_h)\in E(G)}\frac{b_h^2}{b_k^2} \sum\limits_{(v_k, v_h)\in E(G)}x_h^2 \\
&=(1-\alpha)^2 s_k^+\sum\limits_{(v_k, v_h)\in E(G)}x_h^2,
\end{align*}
where $s_k^+= \frac{ \sum\limits_{(v_k, v_h)\in E(G)}b_h^2}{b_k^2}$.

Furthermore, we have
\begin{align*}
 \sum\limits_{k=1}^n(\lambda_\alpha(G)-\alpha d_k^{+})^2 x_k^2&\leq(1-\alpha)^2 \sum\limits_{k=1}^n\left(s_k^+\sum\limits_{(v_k, v_h)\in E(G)}x_h^2\right) \\
&=(1-\alpha)^2 \sum\limits_{k=1}^n\left(\sum\limits_{(v_h, v_k)\in E(G)}s_h^+\right)x_k^2,
\end{align*}
that is
\begin{equation}\label{eq:cb}
 \sum\limits_{k=1}^n\left[(\lambda_\alpha(G)-\alpha d_k^{+})^2-(1-\alpha)^2\sum\limits_{(v_h, v_k)\in E(G)}s_h^+ \right]x_k^2\leq0.
\end{equation}
Therefore, we can conclude that there exists some k such that
$$(\lambda_\alpha(G)-\alpha d_k^{+})^2-(1-\alpha)^2\sum\limits_{(v_h, v_k)\in E(G)}s_h^+\leq0,$$
that is
$$( \lambda_\alpha(G)-\alpha d_k^{+})^2\leq(1-\alpha)^2\sum\limits_{(v_h, v_k)\in E(G)}s_h^+.$$
Hence,  we have
$$ \lambda_\alpha(G)\leq\alpha d_k^{+}+(1-\alpha)\sqrt{\sum\limits_{(v_h, v_k)\in E(G)}s_h^+}.$$
Therefore, we get
 $$\lambda_\alpha(G)\leq \max\left\{\alpha d_i^{+} +(1-\alpha)\sqrt{\sum\limits_{(v_j,v_i) \in E(G)}s_j^+}
  : v_i \in V(G) \right\}.$$
 If the equality holds in \eqref{eq:ca}, then there exists some k, say $k=1$, such that
 $\lambda_\alpha(G)=\alpha d_1^{+} +(1-\alpha)\sqrt{\sum\limits_{(v_j,v_1) \in E(G)}s_j^+}$. Then, from \eqref{eq:cb}, we get
 $$ \sum\limits_{k=2}^n\left[(\lambda_\alpha(G)-\alpha d_k^{+})^2-(1-\alpha)^2\sum\limits_{(v_h, v_k)\in E(G)}s_h^+ \right]x_k^2\leq0.$$
 By a similar reasoning as above, we have
$$\lambda_\alpha(G)=\alpha d_k^{+} +(1-\alpha)\sqrt{\sum\limits_{(v_j,v_k) \in E(G)}s_j^+}, \ 1\leq k\leq n,$$
which means $\alpha d_i^{+} +(1-\alpha)\sqrt{\sum\limits_{(v_j,v_i) \in E(G)}s_j^+}$  ($1\leq i\leq n$) is a constant.
\end{proof}

\begin{corollary}\label{co:1}  \ Let $G$ be a strongly connected digraph with $n$ vertices.
Then
$$ \lambda_\alpha(G)\leq \max\left\{\alpha d_i^{+} +(1-\alpha)\sqrt{\sum\limits_{(v_j,v_i) \in E(G)}d_j^+} : v_i \in
V(G)\right\}$$
\end{corollary}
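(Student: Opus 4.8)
The plan is to derive this directly from Theorem~\ref{thm:1} by specializing the auxiliary weights $b_i$. Theorem~\ref{thm:1} is stated for an arbitrary choice of positive reals $b_1,b_2,\ldots,b_n$, and the quantity $s_i^+=\frac{\sum_{(v_i,v_j)\in E(G)}b_j^2}{b_i^2}$ appearing in the bound depends on that choice. The natural specialization is the uniform one, $b_1=b_2=\cdots=b_n=1$, which is admissible since $1\in\mathbb{R}^+$. The entire content of the corollary is that this choice turns the weighted bound of Theorem~\ref{thm:1} into an outdegree bound.

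First I would substitute $b_i=1$ for all $i$ into the definition of $s_i^+$. Then $b_j^2=1$ for every head $v_j$ of an arc leaving $v_i$, so $\sum_{(v_i,v_j)\in E(G)}b_j^2$ merely counts the arcs with tail $v_i$, which is the outdegree $d_i^+$; since $b_i^2=1$ as well, we obtain $s_i^+=d_i^+$. Substituting $s_j^+=d_j^+$ into the bound \eqref{eq:ca} of Theorem~\ref{thm:1} immediately yields
$$\lambda_\alpha(G)\leq \max\left\{\alpha d_i^{+} +(1-\alpha)\sqrt{\sum\limits_{(v_j,v_i) \in E(G)}d_j^+} : v_i \in V(G)\right\},$$
which is exactly the assertion of the corollary.

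There is essentially no obstacle here: the corollary is an immediate specialization, and the only point requiring (minimal) care is verifying that the weighted quantity $s_i^+$ collapses to the plain outdegree $d_i^+$ under uniform weights. No new Cauchy--Schwarz estimate or eigenvector manipulation is needed, since the full inequality \eqref{eq:cb} and the subsequent averaging over vertices were already carried out in the proof of Theorem~\ref{thm:1}; the corollary simply records the most transparent instance of that general bound.
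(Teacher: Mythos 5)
Your proposal is correct and is exactly the paper's argument: the paper's proof of Corollary~\ref{co:1} likewise just takes $b_i=1$ in \eqref{eq:ca}, under which $s_i^+$ reduces to $d_i^+$. You simply spell out the (trivial) verification in slightly more detail.
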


\begin{proof} \ Taking $b_i = 1$ in \eqref{eq:ca}, the
result follows.
\end{proof}

\section{The $A_\alpha$ energy of  a digraph}

In this section, we will define the $A_\alpha$ energy of a digraph and will also give a formula for the $A_\alpha$ energy in terms of the vertex outdegrees and
the number of closed directed walks of length 2. Moreover, we will characterize the extremal digraphs with maximum and minimum $A_\alpha$ energy among all directed trees
and unicyclic digraphs, respectively.

\begin{definition}\label{de:1} Let $A_\alpha(G)$ be the $A_\alpha$ matrix of a digraph $G$.  The $A_\alpha$ energy of $G$ is defined as
$$E^{A_\alpha}(G)=\sum\limits_{i=1}^n(\lambda^\alpha_i)^2,$$
where $n$ is the number of vertices and $\lambda^\alpha_i$ $(i=1,2,\ldots,n)$
are the eigenvalues of $A_\alpha(G)$.
\end{definition}

For example, let $\overrightarrow{C_4}$ be a directed cycle of length $4$, then
\begin{eqnarray*}
A_\alpha(\overrightarrow{C_4})&=&\left(\begin{array}{cccc}
 \alpha  & 1-\alpha & 0 & 0  \\
 0  & \alpha & 1-\alpha & 0 \\
 0 & 0 & \alpha & 1-\alpha \\
 1-\alpha & 0 & 0 & \alpha \\
\end{array}\right).
\end{eqnarray*}
The eigenvalues of $A_\alpha(\overrightarrow{C_4})$
are $2\alpha-1$, $1$, $\alpha+(1-\alpha)i$, $\alpha-(1-\alpha)i$. Hence the $A_\alpha$ energy of $\overrightarrow{C_4}$ is $4\alpha^2$.

In the following, we will give a formula for $E^{A_\alpha}(G)$.

\begin{theorem}\label{thm:5} Let $G$ be a digraph with the outdegree sequence $\{d_1^+, d_2^+, \ldots, d_n^+\}$, and $c_2$ be the number of closed directed walks of length 2. Then $E^{A_\alpha}(G)=\alpha^2\sum\limits_{i=1}^n(d_i^+)^2+(1-\alpha)^2c_2$.
\end{theorem}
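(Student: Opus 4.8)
The plan is to express $E^{A_\alpha}(G)$ as a trace and then evaluate it by expanding $A_\alpha(G)^2$. The starting point is the identity $\sum_{i=1}^n (\lambda_i^\alpha)^2 = \mathrm{tr}\big(A_\alpha(G)^2\big)$. This holds for an arbitrary (not necessarily symmetric) square matrix: by Schur triangularization $A_\alpha(G) = U T U^{*}$ with $T$ upper triangular carrying the eigenvalues $\lambda_i^\alpha$ on its diagonal, so $A_\alpha(G)^2 = U T^2 U^{*}$ with $T^2$ again upper triangular and having diagonal entries $(\lambda_i^\alpha)^2$, whence $\mathrm{tr}(A_\alpha(G)^2) = \sum_i (\lambda_i^\alpha)^2$. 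I would state this explicitly, because the $A_\alpha$-eigenvalues of a digraph are generally complex, so one cannot simply invoke orthogonal diagonalization as in the undirected case.

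Next I would expand. Writing $A_\alpha(G) = \alpha D + (1-\alpha) A$ with $D = D(G)$ and $A = A(G)$, we get
\begin{equation*}
A_\alpha(G)^2 = \alpha^2 D^2 + \alpha(1-\alpha)(DA + AD) + (1-\alpha)^2 A^2,
\end{equation*}
so by linearity of the trace
\begin{equation*}
\mathrm{tr}\big(A_\alpha(G)^2\big) = \alpha^2\,\mathrm{tr}(D^2) + \alpha(1-\alpha)\big(\mathrm{tr}(DA)+\mathrm{tr}(AD)\big) + (1-\alpha)^2\,\mathrm{tr}(A^2).
\end{equation*}
The three pieces I would then evaluate one by one: since $D = \mathrm{diag}(d_1^+,\ldots,d_n^+)$, one has $\mathrm{tr}(D^2) = \sum_{i=1}^n (d_i^+)^2$; the mixed terms vanish because the diagonal entries of $DA$ and of $AD$ are $d_i^+ a_{ii}$ and $a_{ii} d_i^+$ respectively, and $a_{ii} = 0$ as $G$ has no loops; and $\mathrm{tr}(A^2) = \sum_{i=1}^n\sum_{j=1}^n a_{ij}a_{ji}$ counts precisely the closed directed walks of length $2$, i.e. $\mathrm{tr}(A^2) = c_2$. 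Combining these gives $E^{A_\alpha}(G) = \alpha^2\sum_{i=1}^n (d_i^+)^2 + (1-\alpha)^2 c_2$.

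The computation is routine once the trace identity is in place, so the only real point requiring care is the justification that $\sum_i(\lambda_i^\alpha)^2 = \mathrm{tr}(A_\alpha(G)^2)$ for complex spectra, together with the correct combinatorial reading of $\mathrm{tr}(A^2)$: each product $a_{ij}a_{ji}$ equals $1$ exactly when $(v_i,v_j)$ is a symmetric arc, so this sum counts each closed $2$-walk $v_i \to v_j \to v_i$ once per starting vertex, matching the definition of $c_2$. As a sanity check, for $\overrightarrow{C_4}$ there are no symmetric arcs, so $c_2 = 0$ and $\sum_i (d_i^+)^2 = 4$, giving $E^{A_\alpha} = 4\alpha^2$, in agreement with the worked example preceding the theorem.
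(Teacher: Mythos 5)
Your proposal is correct and follows essentially the same route as the paper: writing $E^{A_\alpha}(G)=\mathrm{tr}(A_\alpha(G)^2)$, expanding the square, and evaluating $\mathrm{tr}(D^2)$, the vanishing mixed traces, and $\mathrm{tr}(A^2)=c_2$ exactly as the paper does. Your explicit justification of the trace--eigenvalue identity via Schur triangularization for non-symmetric matrices is a welcome extra precision that the paper leaves implicit.
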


\begin{proof} Let $\lambda^\alpha_i$ $(i=1,2,\ldots,n)$ be the eigenvalues of $A_\alpha(G)$. Since $E^{A_\alpha}(G)=\sum\limits_{i=1}^n(\lambda^\alpha_i)^2$, we have $E^{A_\alpha}(G)$ is equal to
the trace of the matrix $(A_\alpha(G))^2$. Therefore
\begin{align*}
E^{A_\alpha}(G)&=tr((A_\alpha(G))^2)\\
&=tr(\alpha^2(D(G))^2+\alpha(1-\alpha)D(G)A(G)\\
&\ \ \ \ +\alpha(1-\alpha)A(G)D(G)+(1-\alpha)^2(A(G))^2)\\
&=\alpha^2 tr((D(G))^2)+\alpha(1-\alpha)tr(D(G)A(G))\\
&\ \ \ \ +\alpha(1-\alpha)tr(A(G)D(G))+(1-\alpha)^2tr((A(G))^2).
\end{align*}
Note that $tr(D(G)A(G))=tr(A(G)D(G))=0$ and $tr((A(G))^2)=\sum\limits_{i=1}^n\sum\limits_{j=1}^na_{ij}a_{ji}=c_2$. Hence, we get
\begin{align*}
E^{A_\alpha}(G)&=\alpha^2 tr((D(G))^2)+(1-\alpha)^2tr((A(G))^2)\\
&=\alpha^2\sum\limits_{i=1}^n(d_i^+)^2+(1-\alpha)^2c_2.
\end{align*}
\end{proof}

\begin{corollary}\label{co:2} Let $H$ be a proper sundigraph of $G$, then
$$E^{A_\alpha}(H)<E^{A_\alpha}(G).$$
\end{corollary}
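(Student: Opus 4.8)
The plan is to deduce the corollary directly from the closed formula of Theorem~\ref{thm:5}. Writing $P(G)=\sum_{v_i\in V(G)}(d_i^+)^2$ and recalling that $c_2$ is the number of closed directed walks of length $2$, that formula reads $E^{A_\alpha}(G)=\alpha^2 P(G)+(1-\alpha)^2 c_2(G)$, and the very same identity applies to $H$. Since $\alpha\in[0,1)$ forces $(1-\alpha)^2>0$ while $\alpha^2\ge0$, the whole problem collapses to understanding how the two nonnegative quantities $P$ and $c_2$ change when one passes from $G$ to the proper subdigraph $H$. So the first thing I would do is record these two identities and reframe the desired inequality as a strict decrease of $\alpha^2 P+(1-\alpha)^2 c_2$.

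Next I would reduce an arbitrary proper subdigraph to a sequence of elementary deletions. Because $H$ is proper, either $E(H)\subsetneq E(G)$ or $V(H)\subsetneq V(G)$, so $H$ is reached from $G$ by finitely many elementary moves, each either deleting a single arc or deleting a single vertex with its incident arcs, and at least one move genuinely removes an arc. Since Theorem~\ref{thm:5} applies to every intermediate digraph, it is enough to check that each elementary move leaves $E^{A_\alpha}$ non-increasing. Deleting one arc $(v_i,v_j)$ drops $d_i^+$ by one and fixes the other outdegrees, so $P$ decreases by $2d_i^+-1\ge 1$, while $c_2$ decreases by $2$ when $(v_j,v_i)$ is also present and is otherwise unchanged; deleting a vertex carrying at least one incident arc likewise strictly lowers $P$ (either by losing the term $(d_v^+)^2$ or by dropping an in-neighbour's outdegree). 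Hence both $P$ and $c_2$ are monotone non-increasing under every move, and telescoping gives $E^{A_\alpha}(G)-E^{A_\alpha}(H)=\sum_{\text{moves}}(\text{decrease})\ge 0$.

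The main obstacle is strictness, and this is exactly where the weights $\alpha^2$ and $(1-\alpha)^2$ must be handled with care. As soon as $\alpha>0$, the move that removes an arc contributes a decrease of at least $\alpha^2\cdot 1>0$ through the $P$-term, and since all other moves are non-increasing the total drop is strictly positive, yielding $E^{A_\alpha}(H)<E^{A_\alpha}(G)$. The genuinely delicate regime is $\alpha=0$ together with deletions that touch only non-symmetric arcs (or that remove an isolated vertex): there $P$ is weighted out and $c_2$ is untouched, so the inequality can degenerate to equality. I would therefore emphasize that the strict inequality rests on having a positive contribution from at least one term, which is guaranteed either by $\alpha>0$ (via $P$) or by the removal of a symmetric arc (via $c_2$, using $(1-\alpha)^2>0$); in the settings where the corollary is applied no such degeneracy arises, and stating this hypothesis explicitly is the one point that needs to be pinned down rather than waved through.
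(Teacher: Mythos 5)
Your proposal follows exactly the route the paper intends: the corollary is stated with no proof of its own, as an immediate consequence of the trace formula of Theorem~\ref{thm:5}, and your reduction to elementary deletions --- each removed arc lowers $\sum_i (d_i^+)^2$ by $2d_i^+-1\geq 1$ and never increases $c_2$ --- is the argument the author is tacitly relying on.

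The one substantive point is your caveat about strictness, and you are right to press on it: as literally stated the corollary is false in the degenerate cases you identify. For $\alpha=0$ one has $E^{A_0}(G)=(1-0)^2c_2(G)=c_2(G)$, so deleting a non-symmetric arc leaves the energy unchanged; and for any $\alpha$, deleting an isolated vertex changes nothing. This is not purely cosmetic for the paper: in the proof of Theorem~\ref{thm:6}, Claim~2 invokes the corollary after Claim~1 has already forced $c_2=0$, so when $\alpha=0$ the claimed strict decrease there does not hold (harmlessly, since the lower bound $\alpha^2(n-1)$ is then $0$, but the logic as written is broken). Your proposed fix --- record explicitly that strictness requires either $\alpha>0$ together with the loss of at least one arc, or the loss of a symmetric arc --- is the correct repair, and in that sense your write-up is more careful than the source.
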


A connected digraph without symmetric arcs is called an intree if the outdegree of each vertex is at most one and its underlying undirected graph is a tree.  Obviously,  an intree
with $n$ vetices and $n-1$ arcs. Hence, it has exactly one vertex with outdegree zero, and we call such vertex the root of the intree.

\begin{theorem}\label{thm:6} Let $G$ be a connected digraph with $n$ vertices. Then $\alpha^2(n-1)\leq E^{A_\alpha}(G)\leq(\alpha^2n+1-2\alpha)n(n-1)$.
Moreover, $E^{A_\alpha}(G)=(\alpha^2n+1-2\alpha)n(n-1)$ if and only if $G$ is a symmetric complete digraph, and
$E^{A_\alpha}(G)=\alpha^2(n-1)$ if and only if $G$ is an intree.
\end{theorem}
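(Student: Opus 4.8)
The plan is to derive everything from the formula $E^{A_\alpha}(G)=\alpha^2\sum_{i=1}^n(d_i^+)^2+(1-\alpha)^2c_2$ of Theorem \ref{thm:5}, exploiting that both summands are nonnegative and can be bounded independently. First I would record two elementary estimates on the quantities appearing there. Since $G$ has no loops and no two arcs with the same head and tail, every vertex satisfies $0\le d_i^+\le n-1$, whence $\sum_{i=1}^n(d_i^+)^2\le n(n-1)^2$ with equality iff every $d_i^+=n-1$. Moreover $c_2=\sum_{i,j}a_{ij}a_{ji}$ counts the ordered pairs of mutually adjacent vertices, so $0\le c_2\le n(n-1)$, the upper value being attained exactly when every pair of distinct vertices is joined by a symmetric arc.

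For the upper bound I would simply combine these: $E^{A_\alpha}(G)\le\alpha^2 n(n-1)^2+(1-\alpha)^2 n(n-1)=(\alpha^2n+1-2\alpha)n(n-1)$, the last equality being a routine algebraic simplification. For the equality analysis, note that attaining the bound forces $\sum(d_i^+)^2=n(n-1)^2$, i.e. $d_i^+=n-1$ for all $i$; but this already means every ordered pair of distinct vertices is an arc, so $G$ is the symmetric complete digraph, and for that digraph $c_2=n(n-1)$ holds automatically, so both estimates are simultaneously tight. Conversely, substituting $d_i^+=n-1$ and $c_2=n(n-1)$ into the formula recovers the bound.

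For the lower bound I would discard the symmetric term and use that outdegrees are integers: $(d_i^+)^2\ge d_i^+$, so $\sum(d_i^+)^2\ge\sum d_i^+=m$, the number of arcs. Connectedness of $G$ forces its underlying undirected graph, hence $G$ itself, to contain at least $n-1$ arcs, so $m\ge n-1$ and therefore $E^{A_\alpha}(G)\ge\alpha^2\sum(d_i^+)^2\ge\alpha^2(n-1)$. Assuming $\alpha\ne 0$, equality propagates back through the chain and yields three conditions: $c_2=0$ (no symmetric arcs), $(d_i^+)^2=d_i^+$ for every $i$ (so each $d_i^+\in\{0,1\}$), and $m=n-1$. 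A connected digraph with $n-1$ arcs has a tree as underlying graph, and together with outdegrees at most $1$ and no symmetric arc this is precisely the definition of an intree; conversely an intree has outdegree sequence consisting of $n-1$ ones and a single zero together with $c_2=0$, giving $E^{A_\alpha}(G)=\alpha^2(n-1)$.

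The two algebraic simplifications are routine; the only genuinely structural step is the equality analysis for the lower bound, and that is where I expect the main obstacle. One must check that the three extracted conditions, combined with connectedness, characterize exactly the intrees: that ``connected with $n-1$ arcs'' forces the underlying graph to be a tree and pins down a unique outdegree-zero root. I would also flag explicitly that the lower-bound equality characterization requires $\alpha>0$, since at $\alpha=0$ the energy degenerates to $c_2$, which vanishes for \emph{every} connected digraph without symmetric arcs (for instance the directed cycle $\overrightarrow{C_n}$ with $n\ge 3$), not only for intrees.
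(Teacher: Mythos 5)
Your proof is correct, and for the lower bound it takes a genuinely different route from the paper. The upper bound is handled essentially as in the paper (the paper routes the estimate through $c_2\le m=\sum_i d_i^+\le n(n-1)$ where you bound $c_2\le n(n-1)$ directly; the equality analysis is the same). The lower bound is where you diverge: the paper argues variationally, taking a connected digraph $G_0$ minimizing the energy, invoking strict monotonicity of $E^{A_\alpha}$ under passing to proper subdigraphs (Corollary \ref{co:2}) to show that $G_0$ has no symmetric arcs and that its underlying graph is a tree, and then applying the Cauchy--Schwarz inequality to the $n-1$ non-root outdegrees; you instead use the integrality bound $(d_i^+)^2\ge d_i^+$ together with $m\ge n-1$ for connected digraphs, which is shorter, avoids the extremal-digraph detour entirely, and makes the equality conditions ($c_2=0$, $d_i^+\in\{0,1\}$, $m=n-1$, hence underlying tree) drop out locally rather than from a global minimality assumption. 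Your $\alpha=0$ caveat is a genuine catch: the lower-bound characterization does fail there (any connected digraph without symmetric arcs, e.g.\ $\overrightarrow{C_n}$, also attains $E^{A_0}=0$), and the paper's own argument implicitly needs $\alpha>0$ as well, since Corollary \ref{co:2} is not strict at $\alpha=0$ when the deleted arc is non-symmetric. The only micro-imprecision on your side is that at $\alpha=0$ equality in the upper bound does not by itself force $\sum_i(d_i^+)^2=n(n-1)^2$; the characterization nevertheless survives because $c_2=n(n-1)$ alone already forces $G$ to be the symmetric complete digraph.
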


\begin{proof} For the upper bound, from Theorem \ref{thm:5}, we get
\begin{align*}
E^{A_\alpha}(G)&=\alpha^2\sum\limits_{i=1}^n(d_i^+)^2+(1-\alpha)^2c_2\\
&\leq\alpha^2\sum\limits_{i=1}^n(d_i^+)^2+(1-\alpha)^2m\\
&=\alpha^2\sum\limits_{i=1}^n(d_i^+)^2+(1-\alpha)^2\sum\limits_{i=1}^nd_i^+\\
&\leq\alpha^2(n-1)^2n+(1-\alpha)^2(n-1)n\\
&=(\alpha^2n+1-2\alpha)(n-1)n,
\end{align*}
where $m$ is the number of arcs,  $c_2$ be the number of closed directed walks of length 2,  and the equality holds if and only if $c_2=m$ and $d_i^+=n-1$ for all $i$, which means $G$ is a symmetric complete digraph.

For the lower bound, let $G_0$ be a connected digraph which has the minimum $A_\alpha$ energy among all connected digraphs with $n$ vertices. Then we have the
following two claims.

{\bf Clam 1:} $c_2=0$, where $c_2$ is the number of closed directed  walks of length 2.

Otherwise, let $uvu$ be a closed directed walks of length 2. Then $G'=G_0-\{(u,v)\}$ is a proper connected subdigraph of $G_0$.  By Corollary \ref{co:1}, we have
$E^{A_\alpha}(G')<E^{A_\alpha}(G_0)$, which is a contradiction to the assumption that $G_0$ with the minimum $A_\alpha$ energy.

{\bf Clam 2:} The underlying undirected graph of $G$ is a tree.

Otherwise, there exists a cycle $C$ in the underlying undirected graph of $G$. Let $G'$ be a digraph obtained from $G_0$ by deleting any arc $e\in C$ from $G_0$, that is
$G'=G_0-e$. Then $G'$ is still connected. However, By Corollary \ref{co:1}, we have
$E^{A_\alpha}(G')<E^{A_\alpha}(G_0)$, which is also a contradiction.

Furthermore, by Claims 1 and 2, we get
$$E^{A_\alpha}(G)=\alpha^2\sum\limits_{i=1}^n(d_i^+)^2+(1-\alpha)^2c_2=\alpha^2\sum\limits_{i=1}^n(d_i^+)^2.$$
Since $\sum\limits_{i=1}^nd_i^+=n-1$ and $d_i^+\geq0$, there must exist one vertex with outdegree 0. Without loss of generality,
we assume $d_n^+=0$. Hence, $\sum\limits_{i=1}^{n-1}d_i^+=n-1$.
Therefore, by Cauchy-Schwarz inequality, we get
\begin{align*}
E^{A_\alpha}(G)&=\alpha^2\sum\limits_{i=1}^{n-1}(d_i^+)^2\\
&\geq \alpha^2\frac{(d_1^++d_2^++\cdots+d_{n-1}^+)^2}{n-1}\\
&=\alpha^2(n-1),
\end{align*}
and the equality holds if and only if $d_1^+=d_2^+=\cdots=d_{n-1}^+=1$ and $d_n^+=0$, which means $G_0$ is an intree.
\end{proof}

In the following, we will give another lower and upper bounds for the $A_\alpha$ energy of a connected digraph.

\begin{theorem}\label{thm:7} Let $G$ be connected digraph with $n$ vertices, $m$ arcs,
the maximum outdegree $\Delta^+$ and the minimum outdegree $\delta^+$. Then
$$\frac{\alpha^2m^2}{n}\leq E^{A_\alpha}(G)\leq[\alpha^2 (\Delta^++\delta^+)+(1-\alpha)^2]m-\alpha^2n\Delta^+\delta^+,$$
where the left equality holds if and only if $G$ is a digraph without symmetric arcs and each vertex has the same outdegree, the
right equality holds if and only if $G$ is a symmetric digraph and $d_i^+=\delta^+$ or $d_i^+=\Delta^+$ for each vertex $v_i$.
\end{theorem}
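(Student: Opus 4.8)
The plan is to reduce everything to the closed formula $E^{A_\alpha}(G)=\alpha^2\sum_{i=1}^n(d_i^+)^2+(1-\alpha)^2c_2$ supplied by Theorem \ref{thm:5}, and then to bound the two summands separately. The only structural facts I need about $c_2=\sum_{i,j}a_{ij}a_{ji}$ are that $0\le c_2\le m$: it is nonnegative because it counts ordered pairs of mutually adjacent vertices, and it is at most $m$ because $a_{ij}a_{ji}=1$ forces $(v_i,v_j)\in E(G)$, so each contributing pair consumes a distinct arc. Moreover $c_2=0$ exactly when $G$ has no symmetric arc, while $c_2=m$ exactly when every arc lies in a symmetric pair, i.e. when $G$ is symmetric. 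These two equivalences will drive the equality characterizations at both ends.

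For the lower bound I would first discard the nonnegative term $(1-\alpha)^2c_2$, leaving $E^{A_\alpha}(G)\ge\alpha^2\sum_{i=1}^n(d_i^+)^2$, and then apply the Cauchy--Schwarz (power-mean) inequality together with $\sum_{i=1}^nd_i^+=m$ to obtain $\sum_{i=1}^n(d_i^+)^2\ge m^2/n$. This yields $E^{A_\alpha}(G)\ge\alpha^2 m^2/n$. Equality throughout forces both $c_2=0$ and $d_1^+=\cdots=d_n^+$, i.e. $G$ has no symmetric arc and is outdegree regular, which is precisely the stated left-equality condition.

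For the upper bound the key step is the elementary quadratic inequality $(d_i^+-\delta^+)(d_i^+-\Delta^+)\le0$, valid because $\delta^+\le d_i^+\le\Delta^+$, which rearranges to $(d_i^+)^2\le(\Delta^++\delta^+)d_i^+-\Delta^+\delta^+$. Summing over $i$ and using $\sum_{i=1}^n d_i^+=m$ gives $\sum_{i=1}^n(d_i^+)^2\le(\Delta^++\delta^+)m-n\Delta^+\delta^+$; combining this with $c_2\le m$ produces exactly $[\alpha^2(\Delta^++\delta^+)+(1-\alpha)^2]m-\alpha^2 n\Delta^+\delta^+$. Equality here requires $c_2=m$, so $G$ is symmetric, and $(d_i^+-\delta^+)(d_i^+-\Delta^+)=0$ for every $i$, so each outdegree equals $\delta^+$ or $\Delta^+$, matching the stated right-equality condition.

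The arithmetic is entirely routine, so the only place that needs care is the equality analysis, specifically the claim that $c_2=m$ is equivalent to $G$ being symmetric and $c_2=0$ to $G$ having no symmetric arcs. I expect this to be the main (minor) obstacle, and I would settle it cleanly by observing that $a_{ij}a_{ji}=1$ holds if and only if both $(v_i,v_j)$ and $(v_j,v_i)$ are arcs, whence $c_2$ equals the number of arcs lying in symmetric pairs; the two boundary cases then follow at once.
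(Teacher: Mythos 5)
Your proposal is correct and follows essentially the same route as the paper: both reduce to the trace formula of Theorem \ref{thm:5}, obtain the lower bound by dropping $(1-\alpha)^2c_2$ and applying Cauchy--Schwarz to $\sum_i(d_i^+)^2\ge m^2/n$, and obtain the upper bound from $c_2\le m$ together with $(d_i^+-\delta^+)(d_i^+-\Delta^+)\le 0$, with identical equality analyses. No substantive differences to report.
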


\begin{proof} Let $c_2$ denote the number of closed directed walks of length 2. Clearly, $0\leq c_2\leq m$, and $\sum\limits_{i=1}^{n}d_i^+=m$. Then we have
$$E^{A_\alpha}(G)=\alpha^2\sum\limits_{i=1}^n(d_i^+)^2+(1-\alpha)^2c_2\geq\alpha^2\sum\limits_{i=1}^n(d_i^+)^2\geq\alpha^2\frac{\left(\sum\limits_{i=1}^nd_i^+\right)^2}{n}=\frac{\alpha^2m^2}{n},$$
where the equality holds if and only if $c_2=0$ and $d_1^+=d_2^+=\cdots=d_n^+$, which means $G$ is a digraph without symmetric arcs and each vertex has the same outdegree.

On the other hand, since $\delta^+\leq d_i^+\leq\Delta^+$, for any $1\leq i\leq n$, we have
$$(d_i^+-\delta^+) (d_i^+-\Delta^+)\leq0.$$
Hence
\begin{align*}
E^{A_\alpha}(G)&=\alpha^2\sum\limits_{i=1}^n(d_i^+)^2+(1-\alpha)^2c_2\\
&\leq\alpha^2\sum\limits_{i=1}^n(d_i^+)^2+(1-\alpha)^2m\\
&\leq\alpha^2\sum\limits_{i=1}^n[(\Delta^++\delta^+)d_i^+-\Delta^+\delta^+]+(1-\alpha)^2m\\
&=\alpha^2[ (\Delta^++\delta^+)m-n\Delta^+\delta^+]+(1-\alpha)^2m\\
&=[\alpha^2 (\Delta^++\delta^+)+(1-\alpha)^2]m-\alpha^2n\Delta^+\delta^+,
\end{align*}
where the equality holds if and only if $c_2=m$ and $d_i^+=\delta^+$ or $d_i^+=\Delta^+$ for all $1\leq i\leq n$,
which means $G$ is a symmetric digraph and $d_i^+=\delta^+$ or $d_i^+=\Delta^+$ for each vertex $v_i$.
\end{proof}

Let $G$ be connected digraph,  $\bar{G}$ be the complement of $G$. In the following, we will give a lower and upper bounds for the $A_\alpha$ energy of  $\bar{G}$.

\begin{theorem}\label{thm:8} Let $G$ be a connected digraph with $n$ vertices, $m$ arcs, the maximum outdegree $\Delta^+$,
the minimum outdegree $\delta^+$, and $\bar{G}$ be the complement of $G$. Then
$$\alpha^2(n-1-\Delta^+)(n(n-1)-m)\leq E^{A_\alpha}(\bar{G})\leq[\alpha^2(n-1-\delta^+)+(1-\alpha)^2](n(n-1)-m),$$
where the left equality holds if and only if $G$ is a complete digraph and each vertex has the same outdegree, the
right equality holds if and only if $G$ is a symmetric digraph and each vertex has the same outdegree.
\end{theorem}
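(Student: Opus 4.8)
The plan is to reduce everything to the outdegree sequence and the closed-walk count of $\bar G$ through the formula of Theorem~\ref{thm:5}, and then to estimate the two resulting sums by one-sided termwise bounds. First I would record the two elementary facts relating $\bar G$ to $G$: each vertex $v_i$ has outdegree $n-1-d_i^+$ in $\bar G$, and $\bar G$ has $n(n-1)-m$ arcs. Applying Theorem~\ref{thm:5} to $\bar G$ then gives
\begin{equation*}
E^{A_\alpha}(\bar G)=\alpha^2\sum_{i=1}^n (n-1-d_i^+)^2+(1-\alpha)^2\bar c_2,
\end{equation*}
where $\bar c_2$ is the number of closed directed walks of length $2$ in $\bar G$, subject to $0\le \bar c_2\le n(n-1)-m$.

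For the lower bound I would use $d_i^+\le\Delta^+$, so that $0\le n-1-\Delta^+\le n-1-d_i^+$ and hence $(n-1-d_i^+)^2\ge (n-1-\Delta^+)(n-1-d_i^+)$ for each $i$. Summing, using $\sum_i (n-1-d_i^+)=n(n-1)-m$, and discarding the nonnegative term $(1-\alpha)^2\bar c_2$ yields the claimed lower bound. The upper bound is symmetric: from $d_i^+\ge\delta^+$ I would get $(n-1-d_i^+)^2\le (n-1-\delta^+)(n-1-d_i^+)$, sum as before, and combine with $\bar c_2\le n(n-1)-m$ to obtain $E^{A_\alpha}(\bar G)\le[\alpha^2(n-1-\delta^+)+(1-\alpha)^2](n(n-1)-m)$.

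For the equality cases I would trace back which inequalities are tight. On the lower side, equality forces $\bar c_2=0$ together with $n-1-d_i^+\in\{0,\,n-1-\Delta^+\}$ for every $i$; here $\bar c_2=0$ means $\bar G$ has no symmetric arcs, equivalently every pair of vertices of $G$ is joined by at least one arc, i.e.\ $G$ is complete, while the degree condition collapses to $d_i^+=\Delta^+$ for all $i$, because $d_i^+=n-1$ can occur only when $\Delta^+=n-1$, in which case the two alternatives coincide. On the upper side, equality forces $\bar c_2=n(n-1)-m$, i.e.\ $\bar G$ (equivalently $G$) is symmetric, together with $n-1-d_i^+\in\{0,\,n-1-\delta^+\}$, i.e.\ $d_i^+\in\{\delta^+,\,n-1\}$ for every $i$.

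The main obstacle is precisely this equality analysis, and in particular the step of converting the per-vertex conditions into the claimed global structure. The translations $\bar c_2=0\Leftrightarrow G$ complete and $\bar c_2=n(n-1)-m\Leftrightarrow G$ symmetric are routine. The delicate point is the degree condition: on the lower side it cleanly reduces to outdegree-regularity, but on the upper side the termwise condition only delivers $d_i^+\in\{\delta^+,\,n-1\}$, which is weaker than ``each vertex has the same outdegree.'' I would therefore argue carefully here, exploiting symmetry (which makes $d_i^-=d_i^+$) and connectedness to try to pin down the extremal digraphs, and I would verify small symmetric examples before committing to the regularity conclusion, since it is on this final identification that the sharpness of the stated characterization hinges.
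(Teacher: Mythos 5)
Your derivation of the two inequalities follows exactly the paper's route: apply Theorem~\ref{thm:5} to $\bar G$, use $\overline{d_i^+}=n-1-d_i^+$ and $0\le\bar c_2\le n(n-1)-m$, and bound $\sum_i(\overline{d_i^+})^2$ termwise by $(n-1-\Delta^+)\overline{d_i^+}$ from below and $(n-1-\delta^+)\overline{d_i^+}$ from above. That part is correct, and your treatment of the lower equality case is actually more careful than the paper's: you correctly note that termwise equality allows $\overline{d_i^+}=0$ as well as $\overline{d_i^+}=n-1-\Delta^+$, and that the two alternatives collapse because $\overline{d_i^+}=0$ forces $\Delta^+=n-1$.

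The point you flagged as the delicate one --- that on the upper side the termwise condition only yields $d_i^+\in\{\delta^+,\,n-1\}$ --- is not something you can argue your way out of: the stated characterization is in fact false, and the paper's proof simply asserts ``equality holds if and only if $\overline{d_i^+}=n-1-\delta^+$ for all $i$'' without addressing the $\overline{d_i^+}=0$ alternative. A concrete counterexample: let $n=3$ and let $G$ be the symmetric digraph whose underlying graph is the path $v_2$--$v_1$--$v_3$, so $d_1^+=2$, $d_2^+=d_3^+=1=\delta^+$, $m=4$. Then $\bar G$ consists of the single symmetric pair $(v_2,v_3),(v_3,v_2)$, so $\bar c_2=2=n(n-1)-m$ and $E^{A_\alpha}(\bar G)=\alpha^2(0+1+1)+2(1-\alpha)^2=[\alpha^2(n-1-\delta^+)+(1-\alpha)^2](n(n-1)-m)$; the upper bound is attained although $G$ is not outdegree-regular. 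Your suggestion that symmetry plus connectedness might force regularity does not pan out --- this example is connected and symmetric. The correct ``only if'' condition for the right equality is: $G$ is symmetric and every vertex satisfies $d_i^+\in\{\delta^+,\,n-1\}$. So your instinct to test small symmetric examples before committing was exactly right; had you done so, you would have found that the theorem, not your argument, needs repair.
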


\begin{proof} Let $\overline{c_2}$ denote the number of closed directed walks of length 2 in the complement digraph $\bar{G}$, $\overline{d_i^+}$ denote the outdegree of vertex $v_i$
in the complement digraph $\bar{G}$. Since $\delta^+\leq d_i^+\leq\Delta^+$,  for any $1\leq i\leq n$, we have
 $$n-1-\Delta^+\leq \overline{d_i^+}\leq n-1-\delta^+,$$
 and
$$0\leq\overline{c_2}\leq n(n-1)-m.$$
Therefore
\begin{align*}
E^{A_\alpha}(\bar{G})&=\alpha^2\sum\limits_{i=1}^n(\overline{d_i^+})^2+(1-\alpha)^2\overline{c_2}\\
&\geq\alpha^2\sum\limits_{i=1}^n(\overline{d_i^+})^2\\
&\geq\alpha^2\sum\limits_{i=1}^n(n-1-\Delta^+)\overline{d_i^+}\\
&=\alpha^2(n-1-\Delta^+)(n(n-1)-m),
\end{align*}
where the equality holds if and only if $\overline{c_2}=0$ and $\overline{d_1^+}=\overline{d_2^+}=\cdots=\overline{d_n^+}=n-1-\Delta^+$,
which means $G$ is a complete digraph and each vertex has the same outdegree.

On the other hand,
\begin{align*}
E^{A_\alpha}(\bar{G})&=\alpha^2\sum\limits_{i=1}^n(\overline{d_i^+})^2+(1-\alpha)^2\overline{c_2}\\
&\leq\alpha^2\sum\limits_{i=1}^n(\overline{d_i^+})^2+(1-\alpha)^2(n(n-1)-m)\\
&\leq\alpha^2\sum\limits_{i=1}^n(n-1-\delta^+)\overline{d_i^+}+(1-\alpha)^2(n(n-1)-m)\\
&=\alpha^2(n-1-\delta^+)(n(n-1)-m)+(1-\alpha)^2(n(n-1)-m)\\
&=[\alpha^2(n-1-\delta^+)+(1-\alpha)^2](n(n-1)-m),
\end{align*}
where the equality holds if and only if $\overline{c_2}=n(n-1)-m$ and $\overline{d_i^+}=n-1-\delta^+$ for all $1\leq i\leq n$,
which means $G$ is a symmetric digraph and each vertex has the same outdegree.
\end{proof}

For a connected digraph $G$ with $n$ vertices and $m$ arcs, if $m=n-1$, then $G$ is called a directed tree.
A directed tree with $n$ vertices is called outstar $\overrightarrow{S_n}$ if the directed tree has one vertex with outdegreee $n-1$ and
the other vertices with outdegree 0, and we call the vertex with outdegreee $n-1$ the centre of the outstar. In the following, we will characterize the directed trees which have
the minimum and maximum $A_\alpha$ energy among
all directed trees with $n$ vertices, respectively.

\begin{theorem}\label{thm:9} Let $T$ be a directed tree with $n$ vertices.
Then $\alpha^2(n-1)\leq E^{A_\alpha}(G)\leq\alpha^2(n-1)^2$.
Moreover, $E^{A_\alpha}(T)=\alpha^2(n-1)^2$ if and only if $T$ is a outstar, and
$E^{A_\alpha}(G)=\alpha^2(n-1)$ if and only if $T$ is a intree.
\end{theorem}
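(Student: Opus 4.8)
The plan is to reduce everything to the closed formula $E^{A_\alpha}(T)=\alpha^2\sum_{i=1}^n (d_i^+)^2 + (1-\alpha)^2 c_2$ from Theorem~\ref{thm:5}. Since a directed tree has $m=n-1$ arcs and its underlying undirected graph is a tree, it contains no symmetric arcs (a symmetric pair would create a $2$-cycle in the underlying graph), so $c_2=0$. Hence on the class of directed trees the energy collapses to $E^{A_\alpha}(T)=\alpha^2\sum_{i=1}^n (d_i^+)^2$, and the whole problem becomes the purely combinatorial task of optimizing $\sum_{i=1}^n (d_i^+)^2$ over all nonnegative integer outdegree sequences satisfying $\sum_{i=1}^n d_i^+ = n-1$.

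For the lower bound, I would invoke the constraint $\sum d_i^+ = n-1$ with each $d_i^+\ge 0$ integer, and note that at least one vertex must have outdegree $0$. Applying the Cauchy--Schwarz (power-mean) inequality exactly as in the proof of Theorem~\ref{thm:6}, I get $\sum (d_i^+)^2 \ge (n-1)^2/(n-1) = n-1$ once I restrict the sum to the (at most) $n-1$ vertices of positive outdegree; equality forces each such $d_i^+ = 1$, i.e. $T$ is an intree. For the upper bound, I would use convexity of $t\mapsto t^2$ together with the integrality and the degree bound $0\le d_i^+\le n-1$: among sequences of nonnegative integers with fixed sum $n-1$, the sum of squares is maximized by concentrating all the mass on one coordinate, giving $\sum (d_i^+)^2 \le (n-1)^2$ with equality precisely when one vertex has outdegree $n-1$ and all others have outdegree $0$, i.e. $T$ is an outstar $\overrightarrow{S_n}$.

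To make the concentration argument rigorous rather than merely intuitive, I would phrase it as a ``smoothing'' or ``majorization'' step: whenever two coordinates $d_i^+, d_j^+$ are both positive, replacing them by $d_i^+ + d_j^+$ and $0$ keeps the sum fixed while strictly increasing the sum of squares (since $(a+b)^2 = a^2+b^2+2ab > a^2+b^2$ when $a,b>0$), and this preserves the realizability as a directed tree because the underlying graph can always be taken to be a star when a single vertex carries all the outdegree. Repeating until only one positive coordinate remains yields the outstar and establishes both the bound and the equality case.

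The only genuine subtlety, and the step I expect to need the most care, is the realizability/equality analysis: I must confirm that the extremal outdegree sequences actually correspond to \emph{directed trees} in the sense defined (connected, $m=n-1$), not just abstract degree sequences. The all-ones outdegree sequence with a single sink is realized exactly by the intree, and the single-spike sequence is realized exactly by the outstar, so both extremal values are attained within the class. I would also note that $\alpha\neq 0$ is implicitly needed for these bounds to be meaningful and for the equality characterizations to be sharp (when $\alpha=0$ every directed tree has energy $0$), consistent with the paper's standing assumption $0\le\alpha<1$; strictly the statement as written holds for all such $\alpha$, with the $\alpha=0$ degenerate case making every directed tree simultaneously extremal.
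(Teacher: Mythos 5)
Your proposal is correct and follows essentially the same route as the paper: both reduce to $E^{A_\alpha}(T)=\alpha^2\sum_i (d_i^+)^2$ via Theorem~\ref{thm:5} (with $c_2=0$), obtain the lower bound by the Cauchy--Schwarz argument of Theorem~\ref{thm:6} (which the paper simply cites), and obtain the upper bound from the fact that $\sum_i (d_i^+)^2\leq\bigl(\sum_i d_i^+\bigr)^2$ with equality only when a single coordinate is nonzero. Your smoothing/majorization phrasing of the upper bound is just a more elaborate version of the paper's one-line inequality, and your realizability remarks, while careful, add nothing essential.
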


\begin{proof} From Theorem \ref{thm:5}, we get
$$E^{A_\alpha}(T)=\alpha^2\sum\limits_{i=1}^n(d_i^+)^2\leq\alpha^2\left(\sum\limits_{i=1}^nd_i^+\right)^2=\alpha^2(n-1)^2,$$
where the equality holds if and only if  $\sum\limits_{i=1}^n(d_i^+)^2=\left(\sum\limits_{i=1}^nd_i^+\right)^2=(n-1)^2$,  which means
$T$ has one vertex with outdegreee $n-1$ and the other vertices with outdegree 0,  i.e.,  $T$ is a outstar.

On the other hand, by  Theorem \ref{thm:6}, we have $ E^{A_\alpha}(T)\geq\alpha^2(n-1)$, and the equality holds if and only if $T$ is a intree.
\end{proof}

Let $G$ be a connected digraph with $n$ vertices and $m$ arcs. If $m=n$, and $G$ contains a unique directed cycle,
then $G$ is called a unicyclic digraph. Let $C_a^{n-a}$ is a unicyclic digraph which contains a unique directed cycle
$\overrightarrow{C_a}$ and  an outstar $\overrightarrow{S_{n-a+1}}$ whose centre is on the $\overrightarrow{C_a}$, as shown in Figure \ref{Fig.1.}.
In particular, $C_2^{n-2}$ is a unicyclic digraph which contains a unique directed cycle
$\overrightarrow{C_2}$ and  an outstar $\overrightarrow{S_{n-1}}$  whose centre is on the $\overrightarrow{C_2}$, as shown in Figure \ref{Fig.1.}.
$D_a^{n-a}$ is a set of  unicyclic digraphs which contain a unique directed cycle
$\overrightarrow{C_a}$ and some intrees whose roots are on the $\overrightarrow{C_a}$. In the following, we will characterize the unicyclic digraphs which have
the minimum and maximum $A_\alpha$ energy among
all unicyclic digraphs with $n$ vertices, respectively.

\begin{figure}[H]
\begin{centering}
\includegraphics[scale=0.85]{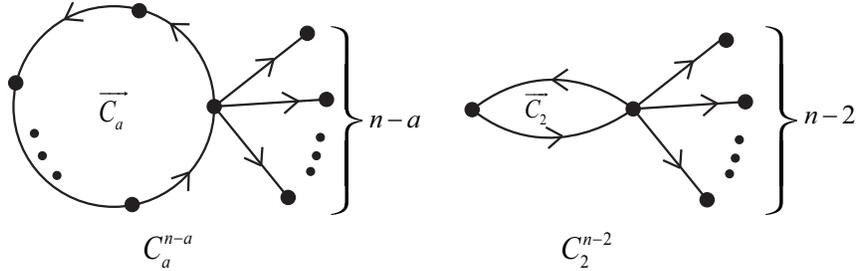}
\caption{The digraphs $C_a^{n-a}$ and $C_2^{n-2}.$}\label{Fig.1.}
\end{centering}
\end{figure}

\begin{theorem}\label{thm:10} Let $G$ be a unicyclic digraph with $n\geq2$ vertices. Then
$$\alpha^2n\leq E^{A_\alpha}(G)\leq\alpha^2(n^2-2n+2)+2(1-\alpha)^2,$$
where the left equality holds if and only if $G$ is $C_n$ or $G\in D_a^{n-a}$. And the right
equality holds if and only if $G$ is $C_2^{n-2}$, and $C_2^{n-2}$ as shown in Figure \ref{Fig.1.}.
\end{theorem}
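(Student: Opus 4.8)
The plan is to reduce everything to the outdegree sequence and to the count $c_2$ via the formula $E^{A_\alpha}(G)=\alpha^2\sum_{i=1}^n(d_i^+)^2+(1-\alpha)^2c_2$ of Theorem \ref{thm:5}. For a unicyclic digraph we have $m=n$ arcs, hence $\sum_{i=1}^n d_i^+=n$, while $0\le d_i^+\le n-1$ since there are no loops or repeated arcs. The first structural observation I would record is that, because a symmetric pair of arcs $\{(u,v),(v,u)\}$ is itself a directed cycle $\overrightarrow{C_2}$ and $G$ has a \emph{unique} directed cycle, $G$ contains at most one symmetric pair; consequently $c_2\in\{0,2\}$, with $c_2=2$ exactly when the unique directed cycle of $G$ is a $\overrightarrow{C_2}$, and $c_2=0$ otherwise. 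These two facts ($\sum d_i^+=n$ and $c_2\in\{0,2\}$) are the whole content of ``unicyclic'' that the argument needs.

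For the lower bound I would simply discard the nonnegative term $(1-\alpha)^2c_2$ and apply the Cauchy--Schwarz inequality, $\sum_{i=1}^n(d_i^+)^2\ge \big(\sum_{i=1}^n d_i^+\big)^2/n=n$, giving $E^{A_\alpha}(G)\ge\alpha^2 n$. Equality forces $c_2=0$ together with $d_1^+=\cdots=d_n^+=1$. A connected digraph in which every outdegree equals $1$ is a functional digraph, i.e.\ it consists of a single directed cycle with in-trees attached; this is precisely $\overrightarrow{C_n}$ or a member of $D_a^{n-a}$, and the extra condition $c_2=0$ selects those whose cycle has length at least $3$, which matches the stated characterization.

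For the upper bound I would bound the two terms separately: $c_2\le2$ from the structural observation, and $\sum_{i=1}^n(d_i^+)^2\le n^2-2n+2$ from the constraints $\sum d_i^+=n$, $0\le d_i^+\le n-1$. The latter I would prove by sorting $d_1^+\ge\cdots\ge d_n^+$ and writing $\sum(d_i^+)^2\le (d_1^+)^2+\big(\sum_{i\ge2}d_i^+\big)^2=(d_1^+)^2+(n-d_1^+)^2$, a convex function of $d_1^+\in[1,n-1]$ whose maximum over that interval is $(n-1)^2+1=n^2-2n+2$. Adding the two bounds yields $E^{A_\alpha}(G)\le\alpha^2(n^2-2n+2)+2(1-\alpha)^2$.

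Equality then demands both $c_2=2$ and the outdegree sequence $(n-1,1,0,\dots,0)$ simultaneously, and the main obstacle is to show these can coexist only in the single digraph $C_2^{n-2}$. The vertex of outdegree $n-1$ dominates every other vertex; the unique remaining out-arc leaves the outdegree-$1$ vertex, and since these are the only two vertices with out-arcs, the sole way to create a symmetric pair (so that $c_2=2$, and indeed the sole way to create any directed cycle at all, so that $G$ is genuinely unicyclic rather than acyclic on $n$ arcs) is to send that arc back to the dominating vertex, producing exactly $C_2^{n-2}$. I expect this realizability-and-uniqueness step, rather than the inequalities themselves, to be the delicate part, since it is where the extremal outdegree sequence must be matched against an actual unicyclic structure; the degenerate case $n=2$, where both bounds collapse to the value of $\overrightarrow{C_2}$, should be noted separately.
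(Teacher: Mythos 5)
Your proof is correct, but it takes a genuinely different route from the paper's. Both arguments start from the trace formula of Theorem \ref{thm:5}, but the paper decomposes $G$ into its unique cycle $\overrightarrow{C_a}$ together with the directed trees $T_1,\dots,T_a$ rooted at the cycle vertices, derives a bound of the form $\alpha^2\left((n-a+1)^2+a-1\right)$ (plus $2(1-\alpha)^2$ when $a=2$) in two separate cases $a\geq3$ and $a=2$, and then optimizes over $a$. You instead extract from unicyclicity only the two global facts $\sum_i d_i^+=n$ and $c_2\in\{0,2\}$ and bound the two terms of the formula independently: Cauchy--Schwarz for the lower bound, and the convexity estimate $\sum_i(d_i^+)^2\leq(d_1^+)^2+(n-d_1^+)^2\leq(n-1)^2+1$ for the upper bound. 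This is more elementary (no tree decomposition, no case split on the cycle length inside the inequalities themselves), and it correctly isolates the one genuinely structural step: that the extremal outdegree sequence $(n-1,1,0,\dots,0)$ together with the existence of a directed cycle forces the single out-arc of the outdegree-one vertex to return to the dominating vertex, yielding $C_2^{n-2}$; your handling of that step, and of the degenerate case $n=2$, is sound, as is your remark that the lower-bound equality selects exactly the all-outdegree-one digraphs with cycle length at least $3$. The trade-off is that the paper's case analysis yields the sharper intermediate bound $\alpha^2(n^2-4n+6)$ for unicyclic digraphs whose cycle has length at least $3$, which your decoupled argument does not produce. (Both your proof and the paper's implicitly assume $\alpha>0$ in the equality discussions: for $\alpha=0$ the energy reduces to $c_2$ and the stated characterizations are no longer ``only if''.)
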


\begin{proof} Let $\overrightarrow{C_a}$ be the unique directed cycle in the unicyclic digraph $G$ with $a\geq2$ vertices.
We use $T_1,T_2,\ldots,T_a$ to denote the directed trees obtained from $G$ by deleting all arcs on the $\overrightarrow{C_a}$. Let each
$T_i$ has $n_i$ vertices, and the outdegree of the vertex in $T_i$ is denoted by $d_{T_i^j}^+$, where $i=1,2,\ldots,a$ and $j=1,2,\ldots,n_i$. Without loss of generality, we assume that
$n_1\geq n_2\geq n_3\geq \cdots\geq n_a\geq1$. Obviously, $\sum\limits_{i=1}^a n_i=n$. In order to prove the upper bound, we consider the following two cases.

{\bf Case 1:} $a\geq3$. Then by Theorem \ref{thm:5}, we get
\begin{align*}
E^{A_\alpha}(G)&=\alpha^2\sum\limits_{i=1}^a\left(\sum\limits_{j=2}^{n_i}(d_{T_i^j}^+)^2+(d_{T_i^1}^++1)^2\right)\\
&=\alpha^2\sum\limits_{i=1}^a\left(\sum\limits_{j=1}^{n_i}(d_{T_i^j}^+)^2+2d_{T_i^1}^++1\right)\\
&\leq\alpha^2\sum\limits_{i=1}^a\left(\left(\sum\limits_{j=1}^{n_i}d_{T_i^j}^+\right)^2+2(n_i-1)+1\right)\\
&=\alpha^2\left(\sum\limits_{i=1}^a(n_i-1)^2+\sum\limits_{i=1}^a(2n_i-1)\right)\\
&\leq\alpha^2\left(\left(\sum\limits_{i=1}^a(n_i-1)\right)^2+2n-a\right)\\
&=\alpha^2\left(\left(n-a\right)^2+2n-a\right)\\
&=\alpha^2\left(\left(n-a+1\right)^2+a-1\right),
\end{align*}
where the equality holds if and only if $d_{T_i^1}^+=n_i-1$, $d_{T_i^2}^+=d_{T_i^3}^+=\cdots=d_{T_i^{n_i}}^+=0$ ($i=1,2,\ldots,a$), $n_1=n-a+1$, and
$n_2=n_3=\ldots=n_a=1$, which means $G$  is $C_a^{n-a}$, and $C_a^{n-a}$ as shown in Figure \ref{Fig.1.}.

Let $f(a)=(n-a+1)^2+a-1$. Then $f'(a)=-2(n-a)-1<0$, $f(a)$ is decreasing on $3 \leq a\leq n$. Thus we have
$f(a)=(n-a+1)^2+a-1\leq n^2-4n+6$. Furthermore, we have $E^{A_\alpha}(G)\leq\alpha^2(n^2-4n+6)$ with the equality if and only if
$G$  is $C_3^{n-3}$.

{\bf Case 2:} $a=2$. Then by Theorem \ref{thm:5}, we get
$$E^{A_\alpha}(G)=\alpha^2\sum\limits_{i=1}^n(d_i^+)^2+2(1-\alpha)^2.$$
Similar to the proof of Case 1, we have
$$E^{A_\alpha}(G)\leq\alpha^2\left(\left(n-2+1\right)^2+2-1\right)+2(1-\alpha)^2=\alpha^2(n^2-2n+2)+2(1-\alpha)^2,$$
with the equality if and only if
$G$  is $C_2^{n-2}$, and $C_2^{n-2}$ as shown in Figure \ref{Fig.1.}.

Combing the above two cases, since $n^2-2n+2\geq n^2-4n+6$, $\alpha^2(n^2-2n+2)+2(1-\alpha)^2>\alpha^2(n^2-4n+6)$, where $n\geq2$.
Hence,
$$E^{A_\alpha}(G)\leq\alpha^2(n^2-2n+2)+2(1-\alpha)^2,$$
where the equality holds if and only if $G$ is $C_2^{n-2}$, and $C_2^{n-2}$ as shown in Figure  \ref{Fig.1.}.

For the lower bound, we also consider the following two cases.

{\bf Case 1:} $a\geq3$. Then by Theorem \ref{thm:5}, we get
\begin{align*}
E^{A_\alpha}(G)&=\alpha^2\sum\limits_{i=1}^a\left(\sum\limits_{j=2}^{n_i}(d_{T_i^j}^+)^2+(d_{T_i^1}^++1)^2\right)\\
&=\alpha^2\sum\limits_{i=1}^a\left(\sum\limits_{j=1}^{n_i}(d_{T_i^j}^+)^2+2d_{T_i^1}^++1\right)\\
&\geq\alpha^2\sum\limits_{i=1}^a\left(\sum\limits_{j=1}^{n_i}d_{T_i^j}^++2d_{T_i^1}^++1\right)\\
&\geq\alpha^2\sum\limits_{i=1}^a(n_i-1+1)\\
&\geq\alpha^2\sum\limits_{i=1}^an_i=\alpha^2n,
\end{align*}
where the equality holds if and only if $d_{T_i^1}^+=0$ for all $i=1,2,\ldots,a$, and
$\sum\limits_{j=1}^{n_i}(d_{T_i^j}^+)^2=\sum\limits_{j=1}^{n_i}d_{T_i^j}^+=n_i-1$, which means $G$ is $\overrightarrow{C_n}$ or
$G\in D_a^{n-a}$.

{\bf Case 2:} $a=2$. Then by Theorem \ref{thm:5}, we get
$$E^{A_\alpha}(G)=\alpha^2\sum\limits_{i=1}^n(d_i^+)^2+2(1-\alpha)^2.$$
Similar to the proof of Case 1, we have
$$E^{A_\alpha}(G)\geq\alpha^2n+2(1-\alpha)^2.$$

Combing the above two cases, we get
$$E^{A_\alpha}(G)\geq\alpha^2n,$$
where the equality holds if and only if $G$ is $\overrightarrow{C_n}$ or
$G\in D_a^{n-a}$.
\end{proof}

\end{document}